\documentclass[11pt,final]{iopart}
\usepackage{amssymb}
\usepackage{amsfonts}
\usepackage{amsthm}
\usepackage{iopams}
\usepackage{graphicx,subfigure}
\usepackage[dvips]{epsfig}
\usepackage{float}
\usepackage[bottom]{footmisc}

\usepackage{perpage}
\MakePerPage{footnote}
\usepackage[title,titletoc,toc]{appendix}
\usepackage{prettyref}
\usepackage{appendix}
\usepackage{wrapfig}
\usepackage{showkeys}
\usepackage{todo}
\usepackage{prettyref}
\usepackage{verbatim}
\usepackage[latin1]{inputenc}
\usepackage{subfigure}
\usepackage[dvips]{epsfig}
\usepackage{algorithm}
\usepackage{algpseudocode}
\usepackage{verbatim}
\newrefformat{eq}{(\ref{#1})}
\newrefformat{fig}{Figure~\ref{#1}}
\usepackage[pdfpagelabels]{hyperref}
\hypersetup{colorlinks=true,
linkcolor=blue,
citecolor=blue,
plainpages=false}
\newtheorem{theorem}{\sc Theorem}[section]
\newtheorem{propn}[theorem]{\sc Proposition}
\newtheorem{ppty}[theorem]{\sc Property}

\newtheorem{assump}[theorem]{\bf Assumption}

\newtheorem{definition}[theorem]{\sc Definition}

\setlength\mathindent{0pt}
\newcommand{\argmin}{{\rm arg\,min}}

\eqnobysec

\makeindex




\newcommand{\cJ}{{\mathcal J}}

\newcommand{\cX}{{\mathcal X}}
\newcommand{\cY}{{\mathcal Y}}




\newcommand{\tu}{\tilde{u}}







\newcommand{\hu}{\hat{u}}

\newcommand{\hw}{\hat{w}}

\newcommand{\hz}{\hat{z}}

\newcommand{\R}{\mathbb{R}}

\newcommand{\norm}[1]{|| #1||}




\newcommand{\ba}{\begin{array}}
\newcommand{\ea}{\end{array}}
\newcommand{\be}{\begin{equation}}
\newcommand{\ee}{\end{equation}}
\newcommand{\bea}{\begin{eqnarray}}
\newcommand{\eea}{\end{eqnarray}}
\newcommand{\beq}{\begin{equation}}
\newcommand{\eeq}{\end{equation}}
\newcommand{\bqt}{\begin{quote}}
\newcommand{\eqt}{\end{quote}}







%
\begin{document}

%
\title[Primal Dual Algorithms]
{New Pair of Primal Dual Algorithms for Bregman 
Iterated Variational Regularization}

%
\author{Erdem Altuntac}

\address{Universite Libre de Bruxelles,
D\`{e}partement de Math\`{e}matique, Boulevard du 
Triomphe, 1050, Bruxelles, Belgium}


\ead{\mailto{Erdem.Altuntac@ulb.ac.be}}

\begin{abstract}

Primal-dual splitting involving proximity operators in order to
be able to find some approximation to the minimizer 
for a general form of
Tikhonov type functional is in the focus of this work. 
This approximation is produced by a pair of 
iterative variational regularization procedures.

Under the assumption of some variational source condition (VSC), 
total error estimation both in the iterative sense
and in the continuous sense has been analysed separately.
Rates of convergence 
will be obtained in terms some concave and positive definite index function. 
Of the choice of the penalty term, we are interested in Bregman distance
penalization associated with the non-smooth
total variation (TV) functional. Furthermore, following up the lower and bounds
defined for the regularization parameter, some deterministic choice of the 
regularization parameter is given explicitly. 
It is in the emphasis of this work that the regularization
parameter obeys {\em Morozov`s discrepancy principle} (MDP) 
in order for the stability analysis of regularized solution.

In the computerized environment, the algorithms are verified as iterative
regularization methods by applying it to an atmospheric 
tomography problem named as {\em GPS-Tomography}.
Apart from this 3-D tomographic inverse
problem, we also apply the algorithms to some 2-D 
conventional tomographic image reconstruction 
problems in order to be able test algorithms` capability of capturing the 
details and observe that algorithms behave as iterative regularization
procedures.

\bigskip
\textbf{Keywords.}
{iterative regularization, primal dual algorithm, Bregman iteration, 
total variation}
\end{abstract}

\bigskip


\section{Introduction}

Following up a recent work in \textbf{\cite{Altuntac18}}, we introduce
the primal dual algorithms in the simpler forms without nested loops.
Both algorithms are to be used for the purpose of finding iteratively
regularized approximations for the inverse ill-posed problems. 
Stability analyses are reduced to be in the emphasis of the stability of 
the iteratively regularized approximations of the regularized 
minimizer for some general form of Tikhonov functional.

In general terms, regularization theory deals with approximation
of some ill-posed inverse problem by a family of parametrized 
well-posed problems. Traditional quadratic-Tikhonov regularization
\textbf{\cite{Tikhonov63, TikhonovArsenin77}} has been
well established and analyzed \textbf{\cite{Engl96}}.
This work focuses on the analysis of some iterative regularization
procedures.

Proximal mapping algorithms and Bregman iterated regularization procedures
have been commonly known. Here, Bregman distance plays the role of penalization
in the corresponding minimization problems. Having a look at the literature,
firstly in the work \textbf{\cite{OsherBurger05}}, 
Bregman iteration has been proposed for providing solution
to the basis pursuit problem. 
In a recent study by Sprung and Hohage {\em et al.}, 2017,
\textbf{\cite{SprungHohage17}}, authors have investigated
convergence rates results for the such objective functionals
with the Bregman distance as penalty term.
Having optimization algorithms in the field of inverse problems
as iterative regularization method has also become popular. Authors in 
\textbf{\cite{GarrigosVilla18}} have proposed some primal-dual
algorithm, wherein the convergence has been studied for the given
noiseless measurement data. We consider linear, inverse 
ill-posed problems in the general form. Stability
of the algorithms will be developed in the context
of convex variational regularization and be verified in the Hadamard sense. 
Main results of our work
are derived in the case of noisy measurement and are in the best 
interest of variational regularization theory.


\section{Notations and Mathematical Setting}
\label{notations}

Over the finite dimensional Hilbert spaces $\cX = \R^{N}$
and $\cY = \R^{M}$, let us be given some linear, injective,
forward operator $T : \cX \rightarrow \cY.$ In this work, 
we concentrate on the numerical solution for the linear inverse ill-posed 
problem of the form
\bea
\label{inverse_problem}
\delta\xi + T u = v^{\delta},
\eea
with some iterative regularization procedure.
Here, the given noisy data $v^{\delta} \in \cY$ and 
the noise model is denoted by $\xi$ with the noise
magnitude $\delta.$

Non-negativity constraint
on our targeted data $u$ is imposed.
Then, the constraint domain is treated as the indicator function
$h : \cX \longrightarrow \{0,1\}$ defined by
\beq
\label{indicator}
h(u) = 1_{\Omega}(u) := \left\{ \begin{array}{rcl}
0 & \mbox{, for} & u \in \Omega \subset \cX \\ 
\infty  & \mbox{, for} & u \notin \Omega \subset \cX .
\end{array}\right.
\eeq 

Throughout the work, unless otherwise stated,
the notation $\norm{\cdot}$ without any subscript
will be used as to denote the usual Euclidean norm.
Let $\sigma(T^{T}T)$ be the
spectrum of $T^{T}T$ is the set of those 
$\sigma_{k} \in \R^{N}.$ 
Then, for the finite dimensional forward operator 
$T : \R^{N} \rightarrow \R^{M}$ where $M < N,$
we define
\begin{displaymath}
\norm{T} := \max_{1\leq k \leq M} 
\left\{ \sqrt{\sigma_{k}} \right\}.
\end{displaymath}

Below, we give two norm estimations that will be in use
of our mathematical development. For some $u_1,u_2 \in \cX$ 
and $\lambda \in \R,$ the following equality holds, 
\textbf{\cite[Eq. (2.1)]{Takahashi13}},
\bea
\label{strct_cvx_eq}
\norm{\lambda u_1 + (1-\lambda)u_2}^2 = \lambda\norm{u_1}^2 + (1-\lambda)\norm{u_2}^2
-\lambda(1-\lambda)\norm{u_1 - u_2}^2.
\eea
Also, nonexpansiveness of the misfit term provides, 
\bea
\norm{T^{T}T(u_{1} - u_{2})}^2 & \leq & 
\norm{T}^2\norm{T(u_{1} - u_{2})}^2
\nonumber\\
& = &\norm{T}^2\langle T(u_{1} - u_{2}) , 
T(u_{1} - u_{2}) \rangle
\nonumber\\
& = & \norm{T}^2\langle T^{T}T(u_{1} - u_{2}) , 
u_{1} - u_{2} \rangle , 
\nonumber
\eea
which implies
\bea
\label{LipschitzEstMisfit}
-\langle u_{1} - u_{2} , 
T^{T}T(u_{1} - u_{2}) \rangle \leq 
-\frac{1}{\norm{T}^2}\norm{T^{T}T(u_{1} - u_{2})}^2 
\eea

For some function $f : \cX \rightarrow \cY$ and some point 
$x$ in the domain of $f,$ the {\em subdifferential} of $f$ at 
$x^{\prime},$ denotes $\partial f(x^{\prime})$ is defined by
\bea
\label{def_subdiff}
\partial f(x^{\prime}) := \left\{ \eta \in \cX^{\ast} :
f(x) - f(x^{\prime}) \geq \langle \eta , x - x^{\prime} \rangle
\mbox{ for all } x \in \cX \right\}.
\eea

\begin{definition}\textbf{[Generalized Bregman Distances]}
Let $\cJ : \cX \rightarrow \R_{+} \cup \{ \infty \}$ be a convex functional 
with the subgradient $q^{\ast} \in \partial \cJ(u^{\ast}).$ 
Then, for $u, u^{\ast} \in \cX,$ Bregman distance associated with the functional 
$J$ is defined by
\bea
\label{bregman_divergence_intro}
D_{\cJ} : & \cX \times \cX & \longrightarrow \R_{+}
\nonumber\\
& (u , u^{\ast}) & \longmapsto D_{\cJ}(u , u^{\ast}) 
:= \cJ(u) - \cJ(u^{\ast}) - 
\langle q^{\ast} , u - u^{\ast} \rangle .
\eea
It is well known that the Bregman distance does not satisfy
symmetry,
\begin{displaymath}
D_{\cJ}(u , u^{\ast}) \neq D_{\cJ}(u^{\ast} , u),
\end{displaymath}
and for the defined convex functional $\cJ$
\begin{displaymath}
D_{\cJ}(u , u^{\ast}) \geq 0.
\end{displaymath}
\end{definition}

With all these tools stated,
we consider the following objective functional,
\bea
\label{obj_functional2}
F_{\alpha}: & \cX & \times \cY \longrightarrow \R_{+}
\nonumber\\
& (u &, v^{\delta}) \longmapsto F_{\alpha}(u,v^{\delta}) := 
\frac{1}{2} \norm{Tu - v^{\delta}}^2 +  
\alpha D_{\cJ}(u , u_{0}) + h(u),
\eea
with some initial estimation $u_{0} \in \cX.$ In particular, we
associate the Bregman distance penalty term with
the total variation (TV) functional defined by
\beq
\label{TV_penalty}
TV(u, \Omega) = \cJ(u) := \int_{\Omega} \vert \nabla u(x) \vert dx 
\approxeq \sum_{i} \vert \nabla_{i} u \vert ,
\eeq
which is, in 3D case, $i = (i_x, i_y, i_z).$ 
For the sake of following the further calculations easily in the future
developments of this work, we introduce TV in the composite form
\bea
\label{composite_TV}
J(u) = g(D u) \mbox{ where, }
g(\cdot) = \norm{\cdot}_{1} \mbox{ with }
D(\cdot) = \nabla (\cdot).
\eea
Thus,
\bea
\label{subdiff_TV}
\partial J(u) = D^{\ast} \partial g(Du).
\eea

\subsection{Overview on the iterative regularization and the choice of the regularization parameter}
\label{it_regularization}

By an iterative procedure involving some iteration operator $R_{I},$
\cite[Ch. 6]{BenningBurger17},
we aim to construct some approximation to the given inverse
ill-posed problem (\ref{inverse_problem})
\bea
\label{iterative_proc}
u_{i} = R_{I}(v^{\delta} , w_{i-1} , \Gamma),
\eea
where $w^{i-1}$ is the collection of dual variables used 
during $i-1$ iteration steps, and $\Gamma$ is the 
auxiliary parameters such as step-size, relaxation parameter, 
regularization parameter.

In the iterative regularization procedures, discrepancy principles act as
the stopping rules for the corresponding algorithms, 
\textbf{\cite[Section 6]{BenningBurger17}}.

\begin{definition}{[Morozov`s Discrepancy Principle (MDP), 
\cite[Def. 6.1]{BenningBurger17}]}

Given deterministic noise model 
$\norm{v^{\dagger}-v^{\delta}} \leq \delta,$ if we choose
$\tau > 1$ and $i^{\ast} = i^{\ast}(\delta , v^{\delta})$ such that
\bea
\norm{T u_{i^{\ast}} - v^{\delta}} \leq \tau \delta < 
\norm{T u_{i} - v^{\delta}}
\eea
is satisfied for $u_{i^{\ast}} = 
R_{I}(v^{\delta} , w_{i^{\ast}-1} , \vec{\alpha})$ and 
$u_{i} = R_{I}(v^{\delta} , w_{i-1} , \vec{\alpha})$ for all
$i < i^{\ast},$ then $u_{i^{\ast}}$ is said to satisfy {\em Morozov`s 
discrepancy principle}.
\end{definition}
Following up MDP, some immediate consequences 
can be given below,
\bea
\label{consequence_MDP}
\norm{T u_{i^{\ast}}^{\delta} - T u^{\dagger}} \leq (\tau + 1)\delta ,
\eea
likewise,
\bea
\tau\delta \leq \norm{T u_{i} - v^{\delta}} & \Rightarrow & 
\tau\delta \leq \norm{T u_{i} - T u^{\dagger}} + \delta
\nonumber\\ 
& \Rightarrow & 
(\tau - 1)\delta \leq \norm{T u_{i} - T u^{\dagger}}
\nonumber\\ 
& \Rightarrow & 
(\tau - 1)^2\delta^2 \leq \norm{T u_{i} - T u^{\dagger}}^2
\nonumber\\ 
& \Rightarrow &
-\norm{T u_{i} - T u^{\dagger}}^2 \leq -(\tau - 1)^2\delta^2
\label{consequence_MDP2}
\eea

Our primal-dual splitting algorithms involve proximal mapping that
is defined below.
\begin{definition}\textbf{[Proximal mapping]}
\label{def_prox}
Let $\cJ : \R^{N} \rightarrow \R \cup \{ \infty \}$
be a proper, convex, lower-semicontinious function.
Then $\mathrm{prox}_{\cJ}$ is defined as the unique minimizer
\bea
\nonumber
\mathrm{prox}_{\cJ}(\tu) := 
\argmin_{u \in \R^{N}} \cJ(u) + \frac{1}{2} \norm{u - \tu}^2.
\eea
\end{definition}

Measuring the deviation of the regularized solution 
$u_{\alpha}^{\delta}$ 
from the minimum norm solution $u^{\dagger}$ 
by the {\em a priori} and {\em a posteriori}
strategies for the choice of the regularization parameter
in Banach spaces with the VSC has been widely studied,

The objective of convergence and convergence rates results in the 
regularization theory is to be able find some stable bound for the total 
error estimation function defined by
\bea
E: & \cX \times \cX & \longrightarrow \R_{+}
\nonumber\\
\label{total_err_est}
& (u_{\alpha}^{\delta} , u^{\dagger}) & \longrightarrow
E(u_{\alpha}^{\delta} , u^{\dagger}) := 
\Lambda\norm{u_{\alpha}^{\delta} - u^{\dagger}},
\eea 
where the coefficient $\Lambda \in \R_{+}$ depends 
on the functional properties of the data on the pre-image space.
Such estimation requires the knowledge of the smoothness of the 
minimum norm solution $u^{\dagger}$ which is some source condition
in the form of varitional inequality, 
\textbf{\cite{BenningSchoenlib17},
\cite{Flemming18}, \cite{Grasmair10},   
\cite[Eq. (1.4)]{HofmannMathe12}, \cite{HofmannKaltenbacher07},
\cite[Section 4]{Kindermann16}
\cite[Theorem 2.60 - (g), Subsection 3.2.4]{Schuster12}}.
Convergence and convergence rates results, 
or in other words the total error estimation, are derived in terms of 
a concave, monotonically increasing, positive definite index function
that is a part of the VSC expression. Following up the work 
\textbf{\cite{Altuntac18}}, the form of VSC that will be used in
our analysis is given below. Also in the aforementioned works that 
have been dedicated to variational regularization, derivation of 
noise dependent error estimation following from VSC has been well 
explained. In our work, any data that is in the constraint domain
is assumed to satisfy VSC.
\begin{assump}
\label{assump_conventional_variational_ineq}
\textbf{[Variational Source Condition]}
Let $T : \cX \rightarrow \cY$ be linear, injective 
forward operator and $v^{\dagger} \in \mathrm{range}(T).$ 
There exists some constant $\sigma \in (0 , 1]$
and a concave, monotonically increasing
index function $\Psi$ with $\Psi(0) = 0$ and 
$\Psi : [0 , \infty) \rightarrow [0 , \infty)$
such that for $q^{\dagger} \in \partial \cJ(u^{\dagger})$
the minimum norm solution $u^{\dagger}\in\mathrm{BV}(\Omega)$
satisfies
\bea
\label{variational_ineq}
\sigma\norm{u-u^{\dagger}} \leq 
\cJ(u) - \cJ(u^{\dagger})
+ \Psi\left( \norm{T u - T u^{\dagger}} \right)  
\mbox{, for all }
u \in \cX .
\eea 
\end{assump}


\section{Subdifferential Characterization}

Both algorithms will evolve from the subdifferential characterization
of the regularized minimizer of the objective functional 
(\ref{obj_functional2}). Then, 
by its first order optimality condition,
\bea
\nonumber
0 \in \partial 
F_{\alpha_{\ast}}(u_{\alpha_{\ast}}^{\delta} , v^{\delta}),
\eea
which implies,
\bea
\label{char_opt_cond}
0 = T^{T}
(T u_{\alpha_{\ast}}^{\delta} - v^{\delta})
+ \alpha_{\ast}\partial \cJ(u_{\alpha_{\ast}}^{\delta}) - 
\alpha_{\ast}\partial \cJ(u_{0}) + \hz 
\mbox{, where }
\hz \in \partial h(u_{\alpha_{\ast}}^{\delta}).
\eea
Furthermore, recall the settings in (\ref{composite_TV}) 
and (\ref{subdiff_TV}) to represent (\ref{char_opt_cond})
in the following form
\bea
\nonumber
0 = T^{T}(T u_{\alpha_{\ast}}^{\delta} - v^{\delta})
+ \alpha_{\ast}D^{T} \hw_{\alpha_{\ast}}^{\delta} 
- \alpha_{\ast}D^{T} \hw_{0} + \hz ,
\eea
where $\hw_{\alpha}^{\delta} \in \partial g(Du_{\alpha}^{\delta})$
and likewise
$\hw_{0} \in \partial g(Du_{0}).$

\begin{theorem}\cite[Theorem 4.1]{Altuntac18}
[Subgradient characterization of the regularized solution]
\label{thrm_subgrad_char}

For any positive valued $\alpha, \nu, \mu,$
the regularized minimizer $u_{\alpha}^{\delta}$ of the 
objective functional (\ref{obj_functional2}) is characterized by
\bea
\label{eq_subgrad_char}
\left\{ \begin{array}{rcl}
u_{\alpha}^{\delta} &=& \mathrm{prox}_{\mu h}
\left[u_{\alpha}^{\delta}  
- \mu \left( T^{T}(Tu_{\alpha}^{\delta} - v^{\delta})
+ \alpha_{\ast}D^{T}(\hw_{\alpha}^{\delta} - \hw_{0} ) \right) 
\right]
\\
\hw_{\alpha}^{\delta} &=& \mathrm{prox}_{\nu g^{\ast}} 
\left( \hw_{\alpha}^{\delta} + 
\nu Du_{\alpha}^{\delta} \right),
\end{array}\right.
\eea
with $\hw_{0} = \partial \norm{D u_{0}}_{1}.$
\end{theorem}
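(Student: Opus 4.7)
The proof plan is to start from the first-order optimality condition already derived in the excerpt, namely
\bea
\nonumber
0 = T^{T}(T u_{\alpha}^{\delta} - v^{\delta})
+ \alpha D^{T} (\hw_{\alpha}^{\delta} - \hw_{0}) + \hz,
\eea
with $\hw_{\alpha}^{\delta} \in \partial g(Du_{\alpha}^{\delta})$, $\hw_{0} \in \partial g(Du_{0})$, and $\hz \in \partial h(u_{\alpha}^{\delta})$, and then recast each inclusion as a proximal fixed-point identity. This rests on the standard characterization: for any proper convex lower-semicontinuous $f$ and any positive step size $\mu$,
\bea
\nonumber
p = \mathrm{prox}_{\mu f}(q) \quad \Longleftrightarrow \quad q - p \in \mu\,\partial f(p).
\eea

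For the first line of (\ref{eq_subgrad_char}), I would rearrange the optimality condition as
\bea
\nonumber
-\mu\Big( T^{T}(Tu_{\alpha}^{\delta}-v^{\delta}) + \alpha D^{T}(\hw_{\alpha}^{\delta}-\hw_{0}) \Big) \in \mu\,\partial h(u_{\alpha}^{\delta}),
\eea
which is exactly $q-u_{\alpha}^{\delta} \in \mu\,\partial h(u_{\alpha}^{\delta})$ with $q := u_{\alpha}^{\delta} - \mu\big(T^{T}(Tu_{\alpha}^{\delta}-v^{\delta}) + \alpha D^{T}(\hw_{\alpha}^{\delta}-\hw_{0})\big)$. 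Applying the proximal characterization to $h$ with step size $\mu$ then yields the first identity. Here I must note that $h$ is the indicator of a (closed convex) set so $\mathrm{prox}_{\mu h}$ is simply the metric projection, and is well defined independent of $\mu > 0$.

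For the second line, the key tool is the Fenchel--Young conjugacy
\bea
\nonumber
\hw_{\alpha}^{\delta} \in \partial g(Du_{\alpha}^{\delta}) \quad \Longleftrightarrow \quad Du_{\alpha}^{\delta} \in \partial g^{\ast}(\hw_{\alpha}^{\delta}),
\eea
which is valid because $g=\|\cdot\|_{1}$ is proper, convex, lower-semicontinuous. Multiplying by $\nu>0$ rewrites this as
\bea
\nonumber
(\hw_{\alpha}^{\delta} + \nu Du_{\alpha}^{\delta}) - \hw_{\alpha}^{\delta} \in \nu\,\partial g^{\ast}(\hw_{\alpha}^{\delta}),
\eea
and a second application of the proximal characterization (now to $g^{\ast}$ with step $\nu$) delivers the second identity.

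The steps are essentially bookkeeping once the right characterizations are invoked, so I do not expect a serious obstacle; the only point deserving care is the chain-rule step $\partial \cJ(u) = D^{T}\partial g(Du)$ used to pass from (\ref{char_opt_cond}) to its composite form, which is already recorded in (\ref{subdiff_TV}) and requires standard qualification conditions (here trivially satisfied since $g$ is finite everywhere). The existence of the subgradients $\hw_{\alpha}^{\delta}$, $\hw_{0}$, $\hz$ follows from the first-order optimality inclusion, so one may introduce them once and then simply match the two inclusions with the two proximal fixed-point equations. This completes the proof.
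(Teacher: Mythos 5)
Your argument is correct and is essentially the standard (and, as far as this paper is concerned, the intended) route: the paper itself does not reprove the theorem but imports it from \cite{Altuntac18}, presenting only the first-order optimality condition in composite form immediately before the statement, which is exactly the inclusion you start from; converting that inclusion into the two fixed-point identities via $p=\mathrm{prox}_{\mu f}(q)\Leftrightarrow q-p\in\mu\,\partial f(p)$ and the conjugate inversion $\hw\in\partial g(Du)\Leftrightarrow Du\in\partial g^{\ast}(\hw)$ is precisely the bookkeeping the cited proof performs. The only cosmetic mismatch is the paper's stray $\alpha_{\ast}$ versus $\alpha$ in the first line of \prettyref{eq}{eq_subgrad_char}, which is a notational inconsistency in the statement rather than a gap in your reasoning.
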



\section{The Primal-Dual Algorithms}

We introduce a pair of algorithms involving proximal mappings.
Both algorithms aim to provide approximation for the regularized
minimizer of the objective functional (\ref{obj_functional2}).
Algorithm \ref{algorithmNPA-I} can be interpreted as a direct 
discrete form of the subdifferential characterization given above.
However, Algorithm \ref{algorithmNPA-II} is endowed with
some projected convex extrapolation on a line segment due to 
the choice of the relaxation parameter $\lambda$.

Further than investigating whether the regularized iterations are better 
approximations, we are also interested in the convergences of those 
approximations towards the minimum norm solution.


\begin{algorithm}
  \caption{Primal Dual Algorithm}
  \label{algorithmNPA-I}
  \begin{algorithmic}[1]
    \Procedure{Define $\underline{\tau}, \overline{\tau}, \alpha_0$}
    {}
      \State \textbf{initiation} Given $u_{0},$
      calculate $w_{0} \in \partial \norm{D u_{0}}_1$ and set
        $w_{1} = w_{0}$
      \While{$\underline{\tau}\delta\leq
      \norm{T u_{i^{\ast}} - v^{\delta}}_{\cY}\leq
      \overline{\tau}\delta$ or 
      $\norm{u_{i^{\ast}} - u^{\dagger}}_{\cX}/
      \norm{u^{\dagger}}_{\cX} \leq \epsilon$}
        \State 
        \label{parametersII}
        $\mu_i, \nu_{i}, \alpha_i$ \Comment{Parameter update}
         \State 
         \label{algorithm_primalstep}
         $u_{i+1} = \mathrm{prox}_{\mu h}                                                                                                                                   
        \left[u_{i} - \mu_i\left(T^{T}
        (T u_{i} - v^{\delta}) + 
        \alpha_{i}D^{T}(w_{i} - w_{0})\right) \right]$ 
        \Comment{Primal update}
        \State
        \label{algorithm_dualstep}
         $w_{i+1} = 
        \mathrm{prox}_{\nu_i g^{\ast}} 
        \left( w_{i} + \nu_i D u_{i+1} \right)$ \Comment{Dual update}
        \State\textbf{calculate} $D^{T}(w_{i} - w_{0})$
      \EndWhile     
    \EndProcedure
  \end{algorithmic}
\end{algorithm}

\begin{algorithm}
  \caption{Primal Dual Algorithm With Convex Extrapolation Over Some Line}
  \label{algorithmNPA-II}
  \begin{algorithmic}[1]
    \Procedure{Define $\underline{\tau}, \overline{\tau}, \alpha_0$}
    {}
      \State \textbf{initiation} Given $u_{0},$
      calculate $w_{0} \in \partial \norm{D u_{0}}_1$ and set
        $w_{1} = w_{0}$
      \While{$\underline{\tau}\delta\leq
      \norm{T u_{i^{\ast}} - v^{\delta}}_{\cY}\leq
      \overline{\tau}\delta$ or 
      $\norm{u_{i^{\ast}} - u^{\dagger}}_{\cX}/
      \norm{u^{\dagger}}_{\cX} \leq \epsilon$}
        \State 
        \label{parametersII}
        $\mu_i, \nu_{i}, \alpha_i$ \Comment{Parameter update}
         \State 
         \label{algorithm_primalstep}
         $\hu_{i+1} = \mathrm{prox}_{\mu h}                                                                                                                                   
        \left[u_{i} - \mu_i\left(T^{T}
        (T u_{i} - v^{\delta}) + 
        \alpha_{i}D^{T}(w_{i} - w_{0})\right) \right]$ 
        \Comment{Primal update}
        \State
        \label{algorithm_dualstep}
         $w_{i+1} = 
        \mathrm{prox}_{\nu_i g^{\ast}} 
        \left( w_{i} + \nu_i D \hu_{i+1} \right)$ \Comment{Dual update}
        \State\textbf{calculate} $D^{T}(w_{i} - w_{0})$

        \State \textbf{update} 
        \label{projected_linesearch}
        $u_{i + 1} = (1-\lambda)u_{i} + \lambda\hu_{i+1}$ 
        \Comment{Convex extrapolation with $\lambda \in (1,2)$}
      \EndWhile     

    \EndProcedure
  \end{algorithmic}
\end{algorithm}

\section{Stability Analysis of the Algorithms}
\label{iterative_convergence}

Iterative total error estimation can be decomposed in the following form
\bea
\norm{u_{i} - u^{\dagger}} \leq \norm{u_{i} - u_{\alpha}^{\delta}} 
+ \norm{u_{\alpha}^{\delta} - u^{\dagger}}
\nonumber
\eea
The term on the far right has been very well analysed in the context of
variational regularization. In what follows, we will focus on
the term on the left hand side and the first term on the right hand side.
Before stating that the $u_{i}$ is the approximation to the regularized 
minimizer $u_{\alpha}^{\delta}$ of the objective functional, necessary 
and sufficient conditions for the boundedness of 
$\norm{u_{i} - u_{\alpha}^{\delta}}$ must be established.
To this end, we shall give a pair of
observations on the iterative approximations produced by 
the both algorithms. The assertions in the following formulations 
describe how the parameters in the algorithms must be chosen. 
Further assumption for the theoretical developments is the initial
guess. In order to overcome the mathematical difficulties, 
it is always assumed that the initial guess of the objective 
functional and the initial guess for the algorithms are the same.

\subsection{Iterative approximations of the regularized minimizer}

The primary tool to study stability of the
iteratively regularized approximations that are produced by 
the proximal gradient algorithms is formulated below.
\begin{ppty}\cite[Lemma 1]{LorisVerhoeven11}
\label{property_prox_update}
If $x^{+} = \mathrm{prox}_{g}(x^{-} + \Delta)$, then
for any $y \in \R^{N},$
\bea
\label{prox_update_assertion}
\norm{x^{+} - y}^{2} \leq \norm{x^{-} - y}^{2} - \norm{x^{+} - x^{-}}^2
+ 2 \langle x^{+} - y , \Delta \rangle + 2 g(y) - 2 g(x^{+}).
\eea
\end{ppty}
Before stability analysis, we give some observations on the primal
variables produced by the both algorithms.
\begin{propn}
\label{propn_better_approxI}
Let the step-length $\mu_i$ satisfy 
$\mu_i \leq \frac{2}{\norm{T}^2}.$ 
Furthermore, let the iterative regularization 
parameter be chosen as $\alpha_{i} = i(\delta , v^{\delta})$
and $\nu_{i} \leq \frac{i(\delta , v^{\delta})^2}{\mu_{i}^2}.$
Then, iteratively regularized primal variable
$u_{i + 1}$ that is produced by Algorithm \ref{algorithmNPA-I}
is a better approximation of 
$u_{\alpha}^{\delta}$ than
$u_{i}$ for each $i = 0, 1, 2, \cdots$.
\end{propn}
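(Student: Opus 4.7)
The plan is to start from the primal update in Algorithm \ref{algorithmNPA-I} and apply Property \ref{property_prox_update} with $x^{+}=u_{i+1}$, $x^{-}=u_{i}$, $g=\mu_{i}h$, increment $\Delta_{i}=-\mu_{i}\bigl(T^{T}(Tu_{i}-v^{\delta})+\alpha_{i}D^{T}(w_{i}-w_{0})\bigr)$, and test point $y=u_{\alpha}^{\delta}$. Since $h=1_{\Omega}$ and both $u_{i+1}$ (as the output of $\mathrm{prox}_{\mu_i h}$) and $u_{\alpha}^{\delta}$ lie in $\Omega$, the $h$-terms vanish, leaving
\begin{equation*}
\norm{u_{i+1}-u_{\alpha}^{\delta}}^{2}\leq \norm{u_{i}-u_{\alpha}^{\delta}}^{2}-\norm{u_{i+1}-u_{i}}^{2}+2\langle u_{i+1}-u_{\alpha}^{\delta},\Delta_{i}\rangle.
\end{equation*}

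Next I would use Theorem \ref{thrm_subgrad_char} to rewrite $\Delta_{i}$ around the fixed point: there exists $\hz\in\partial h(u_{\alpha}^{\delta})$ with $T^{T}(Tu_{\alpha}^{\delta}-v^{\delta})+\alpha_{i}D^{T}(\hw_{\alpha}^{\delta}-\hw_{0})+\hz=0$, so that, after adding this zero inside the inner product,
\begin{equation*}
\Delta_{i}=-\mu_{i}T^{T}T(u_{i}-u_{\alpha}^{\delta})-\mu_{i}\alpha_{i}D^{T}(w_{i}-\hw_{\alpha}^{\delta})+\mu_{i}\hz.
\end{equation*}
The subgradient contribution is non-positive: $\langle u_{i+1}-u_{\alpha}^{\delta},\hz\rangle\leq h(u_{i+1})-h(u_{\alpha}^{\delta})=0$, so it can be discarded.

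For the forward operator part, I would split $u_{i+1}-u_{\alpha}^{\delta}=(u_{i}-u_{\alpha}^{\delta})+(u_{i+1}-u_{i})$ and bound the first piece by (\ref{LipschitzEstMisfit}) and the cross term by Young's inequality $2ab\leq a^{2}+b^{2}$ with $a=\norm{u_{i+1}-u_{i}}$ and $b=\mu_{i}\norm{T^{T}T(u_{i}-u_{\alpha}^{\delta})}$. This introduces the factor $\mu_{i}(\mu_{i}-2/\norm{T}^{2})$ multiplying $\norm{T^{T}T(u_{i}-u_{\alpha}^{\delta})}^{2}$, which is non-positive under the hypothesis $\mu_{i}\leq 2/\norm{T}^{2}$, and simultaneously absorbs the term $-\norm{u_{i+1}-u_{i}}^{2}$ coming from Property \ref{property_prox_update}. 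This is the standard Baillon--Haddad style contraction of the explicit gradient step.

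The main obstacle is the residual dual term $-2\mu_{i}\alpha_{i}\langle D(u_{i+1}-u_{\alpha}^{\delta}),w_{i}-\hw_{\alpha}^{\delta}\rangle$. To dispose of it I would split it again using Young's inequality and then compare with a parallel application of Property \ref{property_prox_update} to the dual update $w_{i+1}=\mathrm{prox}_{\nu_{i}g^{\ast}}(w_{i}+\nu_{i}Du_{i+1})$ against the fixed point $\hw_{\alpha}^{\delta}=\mathrm{prox}_{\nu g^{\ast}}(\hw_{\alpha}^{\delta}+\nu Du_{\alpha}^{\delta})$; the coupling constant is precisely $\mu_{i}\alpha_{i}$, which the hypothesis $\nu_{i}\leq \alpha_{i}^{2}/\mu_{i}^{2}$ is tailored to match, since it gives $\sqrt{\nu_{i}}\,\mu_{i}\leq \alpha_{i}$. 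Combining these bounds I expect all mixed terms to cancel, leaving $\norm{u_{i+1}-u_{\alpha}^{\delta}}^{2}\leq \norm{u_{i}-u_{\alpha}^{\delta}}^{2}$, with the key technical delicacy being to verify that the non-symmetry of the Bregman-style pairing $\langle D(u_{i+1}-u_{\alpha}^{\delta}),w_{i}-\hw_{\alpha}^{\delta}\rangle$ can indeed be closed off using the non-expansiveness of the dual prox rather than a looser Cauchy--Schwarz estimate.
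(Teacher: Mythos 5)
Your overall strategy is the same as the paper's: one application of Property \ref{property_prox_update} to the primal update against $y=u_{\alpha}^{\delta}$, elimination of the gradient of the misfit at the fixed point via the optimality condition of $u_{\alpha}^{\delta}$ (the paper implements this as a second, degenerate application of Property \ref{property_prox_update} rather than by quoting Theorem \ref{thrm_subgrad_char}, but the two are equivalent), the Young/Baillon--Haddad treatment of the $T^{T}T$ cross term using \prettyref{eq:LipschitzEstMisfit} to produce the factor $\mu_{i}\bigl(\mu_{i}-\tfrac{2}{\norm{T}^{2}}\bigr)$ and absorb $-\norm{u_{i+1}-u_{i}}^{2}$, and finally a parallel prox-descent inequality for the dual update combined with a $\nu_{i}$-weighted sum so that the condition $\nu_{i}\leq\alpha_{i}^{2}/\mu_{i}^{2}$ controls the coupling. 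So this is not a different route.

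There is, however, one concrete gap. When you rewrite $\Delta_{i}$ using the stationarity of $u_{\alpha}^{\delta}$, you write the optimality condition with the \emph{iterative} parameter $\alpha_{i}$, i.e.\ $T^{T}(Tu_{\alpha}^{\delta}-v^{\delta})+\alpha_{i}D^{T}(\hw_{\alpha}^{\delta}-\hw_{0})+\hz=0$. The minimizer $u_{\alpha}^{\delta}$ is a critical point of $F_{\alpha}$ for a \emph{fixed} $\alpha$ (see \prettyref{eq:char_opt_cond}), whereas $\alpha_{i}=i(\delta,v^{\delta})$ varies with $i$; the two need not coincide. Carrying out the cancellation correctly leaves a residual term proportional to $(\alpha-\alpha_{i})$, namely $2\mu_{i}(\alpha-\alpha_{i})\langle u_{\alpha}^{\delta}-u_{i+1},D^{T}w_{\alpha}^{\delta}\rangle$, which is exactly the extra inner product the paper keeps in its estimate and bounds separately by a weighted Young inequality; your decomposition silently drops it. Relatedly, your expectation that ``all mixed terms cancel'' leaving the clean monotonicity $\norm{u_{i+1}-u_{\alpha}^{\delta}}^{2}\leq\norm{u_{i}-u_{\alpha}^{\delta}}^{2}$ is stronger than what the paper's own final display achieves: after dividing by $\nu_{i}$ it still retains nonnegative remainders in $\norm{w_{i}-w_{\alpha}^{\delta}}^{2}$ and $\norm{D(u_{i+1}-u_{\alpha}^{\delta})}^{2}$, which are only argued away by the asserted parameter choices. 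To close your argument you must reinstate the $(\alpha-\alpha_{i})$ term and show explicitly how the hypotheses on $\mu_{i}$, $\nu_{i}$, $\alpha_{i}$ make it and the dual remainders harmless.
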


\begin{proof}
Following first two error estimations between
the final update $u_{i + 1}$ of the Algorithm 
\ref{algorithmNPA-I} and the regularized minimizer
$u_{\alpha}^{\delta}$ of the objective functional (\ref{obj_functional2}) according to the Property \ref{property_prox_update} is given by
\bea
\norm{u_{i + 1} - u_{\alpha}^{\delta}}^2 \leq 
\norm{u_{i} - u_{\alpha}^{\delta} }^{2} - 
\norm{ u_{i + 1} - u_{i} }^{2}
& - & 2\mu_{i} \langle u_{i + 1} - u_{\alpha}^{\delta} , 
T^{T}(T u_{i} - v^{\delta}) \rangle .
\nonumber\\
& - & 2\mu_{i}\alpha_{i}\langle u_{i + 1} - u_{\alpha}^{\delta} , 
D^{T}(w_{i} - w_{0}) \rangle .
\eea
Likewise, still by Property \ref{property_prox_update},
\bea
\norm{u_{\alpha}^{\delta} - u_{i + 1}}^2 \leq 
\norm{u_{\alpha}^{\delta} - u_{i + 1}}^2 - 
\norm{u_{\alpha}^{\delta} - u_{\alpha}^{\delta}}
& - & 2\mu \langle u_{\alpha}^{\delta} - u_{i + 1} , 
T^{T}(T u_{\alpha}^{\delta} - v^{\delta}) \rangle 
\nonumber\\
& - & 2\mu\alpha \langle u_{\alpha}^{\delta} - u_{i + 1} , 
D^{T}(w_{\alpha} - w_{0})\rangle .
\eea
After the necessary simplifications, these both estimations in total return,
\bea
\norm{u_{i + 1} - u_{\alpha}^{\delta}}^2 \leq 
\norm{u_{i} - u_{\alpha}^{\delta} }^{2} - 
\norm{ u_{i + 1} - u_{i} }^{2}  
& + & 2\mu_{i}\langle u_{i+1} - u_{\alpha}^{\delta} , 
T^{T}T(u_{\alpha}^{\delta} - u_{i}) \rangle
\nonumber\\
& - & 2\mu_{i}\alpha\langle u_{\alpha}^{\delta} - u_{i+1} , 
D^{T}(w_{\alpha} - w_{0}) \rangle
\nonumber\\
& - & 2\mu_{i}\alpha_{i}\langle u_{i + 1} - 
u_{\alpha}^{\delta} , D^{T}(w_{i} - w_{0}) \rangle .
\label{primalvar_AlgI_total0}
\eea
Some useful upper bound for
the inner product on the right hand side of the 1st line is 
given below,
\bea
\label{primal_innerprod0}
2\mu_{i}\langle u_{i+1} - u_{\alpha}^{\delta} , 
T^{T}T(u_{\alpha}^{\delta} - u_{i}) \rangle & = &
2\mu_{i}\langle u_{i+1} - u_{i} , 
T^{T}T(u_{\alpha}^{\delta} - u_{i})\rangle
- 2\mu_{i}\langle u_{\alpha}^{\delta} - u_{i}, 
T^{T}T(u_{\alpha}^{\delta} - u_{i})\rangle
\nonumber\\
& \leq &
\norm{u_{i+1} - u_{i}}^2 + 
\mu_{i}^2\norm{T^{T}T(u_{\alpha}^{\delta} - u_{i})}^2
- 2 \mu_{i}\frac{1}{\norm{T}^2}
\norm{T^{T}T(u_{\alpha}^{\delta} - u_{i})}^2
\nonumber\\
& = & \norm{u_{i+1} - u_{i}}^2 + 
\mu_{i}\left(\mu_{i} - \frac{2}{\norm{T}^2}\right)
\norm{T^{T}T(u_{\alpha}^{\delta} - u_{i})}^2 ,
\eea
where we have used (\ref{LipschitzEstMisfit}). Also, total of
the inner products on the 2nd and the 3rd lines of 
(\ref{primalvar_AlgI_total0}) can be given
in a simpler form. Thus, 
\bea
\norm{u_{i + 1} - u_{\alpha}^{\delta}}^2 \leq 
\norm{u_{i} - u_{\alpha}^{\delta} }^{2} & + &
\mu_{i}\left(\mu_{i} - \frac{2}{\norm{T}^2}\right)
\norm{T^{T}T(u_{\alpha}^{\delta} - u_{i})}^2 
\nonumber\\
& - & 2\mu_{i}(\alpha - \alpha_{i})
\langle u_{\alpha}^{\delta} - u_{i + 1} , D^{T}w_{\alpha}^{\delta}\rangle 
- 2\mu_{i}\alpha_{i}\langle u_{\alpha}^{\delta} - u_{i + 1} , 
D^{T}(w_{\alpha}^{\delta} - w_{i})\rangle
\nonumber\\
\label{primal_final_estI}
\eea
Analagous estimations on the dual iterative variable $w_{i+1}$ and
the dual variable $w_{\alpha}^{\delta}$ can be derived as follows,
\bea
\norm{w_{i+1} - w_{\alpha}^{\delta}}^2 \leq 
\norm{w_{i} - w_{\alpha}^{\delta}}^2 
- \norm{w_{i+1} - w_{i}}^2
+ 2\nu_{i}\langle w_{i+1} - w_{\alpha}^{\delta} , 
D(u_{i+1} - u_{\alpha}^{\delta}) \rangle .
\label{dual_varI_total0}
\eea
We rewrite the inner product on the right hand side 
\bea
2\nu_{i}\langle w_{i+1} - w_{\alpha}^{\delta} , 
D(u_{i+1} - u_{\alpha}^{\delta}) \rangle = 
2\nu_{i}\langle w_{i+1} - w_{i} , 
D(u_{i+1} - u_{\alpha}^{\delta}) \rangle + 
2\nu_{i}\langle w_{i} - w_{\alpha}^{\delta} , 
D(u_{i+1} - u_{\alpha}^{\delta}) \rangle
\nonumber\\
\label{dual_varI_innerprod}
\eea
Now, $\nu_{i}$ times (\ref{primal_final_estI}) plus
$\mu_{i}\alpha_{i}$ times (\ref{dual_varI_total0}) 
with taking into account (\ref{dual_varI_innerprod}) 
will result in further term reduction,
\bea
\nu_{i}\norm{u_{i + 1} - u_{\alpha}^{\delta}}^2 & \leq & 
\nu_{i}\norm{u_{i} - u_{\alpha}^{\delta} }^{2} + 
\nu_{i}\mu_{i}\left(\mu_{i} - \frac{2}{\norm{T}^2} \right)
\norm{T^{T}T(u_{\alpha}^{\delta} - u_{i})}^2 + 
\mu_{i}\alpha_{i}\norm{w_{i} - w_{\alpha}^{\delta}}^2 
\nonumber\\
& - & \mu_{i}\alpha_{i}\norm{w_{i+1} - w_{i}}^2
+ 2\nu_{i}\mu_{i}(\alpha - \alpha_{i})
\langle u_{\alpha}^{\delta} - u_{i + 1} , D^{T}w_{\alpha}^{\delta}\rangle 
\nonumber\\
&+& 2\nu_{i}\mu_{i}\alpha_{i}\langle w_{i+1} - w_{i} , 
D(u_{i+1} - u_{\alpha}^{\delta}) \rangle ,
\label{primal_est_total0_AlgI}
\eea
where we have also dropped the term 
$\mu_{i}\alpha_{i}\norm{w_{i+1} - w_{\alpha}^{\delta}}^2$ 
from the left hand side
since the boundedness of the error estimation for the primal
variables are in the interest of this result. 
Also, the inner products, after using
$\langle \cdot , D^{T}(\cdot) \rangle = 
\langle D(\cdot) , \cdot \rangle$ 
are bounded in the following ways,
\bea
2\nu_{i}\mu_{i}\alpha_{i}\langle w_{i+1} - w_{i} , 
D(u_{i+1} - u_{\alpha}^{\delta}) \rangle 
& \leq &
(\mu_{i}\alpha_{i})^2\sqrt{\nu_{i}}\norm{w_{i+1} - w_{i}}^2 + 
\sqrt{\nu_{i}}\norm{D(u_{i+1} - u_{\alpha}^{\delta})}^2,
\nonumber
\\
2\nu_{i}\mu_{i}(\alpha - \alpha_{i})
\langle D(u_{\alpha}^{\delta} - u_{i + 1}) , 
w_{\alpha}^{\delta}\rangle  & \leq &
\left(\mu_{i}(\alpha - \alpha_{i})\right)^2
\sqrt{\nu_{i}}\norm{w_{i} - w_{\alpha}^{\delta}}^2 +
\sqrt{\nu_{i}}\norm{D(u_{i+1} - u_{\alpha}^{\delta})}^2.
\nonumber
\eea
In the light of these bounds,
after multiplying both sides by $\frac{1}{\nu_{i}}$
of (\ref{primal_est_total0_AlgI}),
\bea
\norm{u_{i + 1} - u_{\alpha}^{\delta}}^2 & \leq & 
\norm{u_{i} - u_{\alpha}^{\delta} }^{2} +
\mu_{i}\left(\mu_{i} - \frac{2}{\norm{T}^2} \right)
\norm{T^{T}T(u_{\alpha}^{\delta} - u_{i})}^2 + 
\frac{1}{\nu_{i}}\mu_{i}\alpha_{i}
\norm{w_{i} - w_{\alpha}^{\delta}}^2 
\nonumber\\
& + & \frac{1}{\sqrt{\nu_{i}}}\mu_{i}\alpha_{i}
(\mu_{i}\alpha_{i} - \frac{1}{\sqrt{\nu_{i}}})
\norm{w_{i+1} - w_{i}}^2 +\frac{1}{\sqrt{\nu_{i}}}
\left(\mu_{i}(\alpha - \alpha_{i})\right)^2
\norm{w_{i} - w_{\alpha}^{\delta}}^2
\nonumber\\
& + & \frac{2}{\sqrt{\nu_{i}}}
\norm{D(u_{i+1} - u_{\alpha}^{\delta})}^2.
\eea
Hence, asserted parameter choices will yield the result.
\end{proof}
As for the Algorithm \ref{algorithmNPA-II}, the similar 
observation is also formulated below.
\begin{propn}
\label{propn_better_approxII}
Let the relaxation parameter of the 
step \ref{projected_linesearch} of the 
Algorithm \ref{algorithmNPA-II} be 
$\lambda \in (1,2)$ and the step-length be 
$\mu_i \leq \frac{2}{\norm{T}^2}.$
Furthermore, let the iterative regularization 
parameter be chosen as $\alpha_{i} = i(\delta , v^{\delta})$
and $\nu_{i} \leq \frac{i(\delta , v^{\delta})^2}{\mu_{i}^2}.$
Then, the iteratively regularized primal variable
$u_{i + 1}$ is a better approximation of 
$u_{\alpha^{\ast}}^{\delta}$ than
$u_{i}$ for each $i = 0, 1, 2, \cdots$.
\end{propn}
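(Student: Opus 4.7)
The plan is to mirror the proof of Proposition \ref{propn_better_approxI} for the intermediate primal iterate $\hat{u}_{i+1}$ produced at step \ref{algorithm_primalstep} of Algorithm \ref{algorithmNPA-II}, and then translate that bound into one on the extrapolated update $u_{i+1} = (1-\lambda)u_i + \lambda\hat{u}_{i+1}$ of step \ref{projected_linesearch} via the identity (\ref{strct_cvx_eq}). The key additional subtlety compared to Proposition \ref{propn_better_approxI} is that over-relaxation ($\lambda > 1$) introduces a positive term that must be absorbed by a quantitatively sharper contraction on $\norm{\hat{u}_{i+1} - u_\alpha^\delta}^2$.

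First, I apply Property \ref{property_prox_update} to the primal prox-update in step \ref{algorithm_primalstep}, obtaining the exact analogue of the opening display of the proof of Proposition \ref{propn_better_approxI} with $\hat{u}_{i+1}$ in place of $u_{i+1}$. Applying Property \ref{property_prox_update} likewise to the dual step (which in Algorithm \ref{algorithmNPA-II} is driven by $\hat{u}_{i+1}$) yields the counterpart of (\ref{dual_varI_total0}). Taking the weighted combination by $\nu_i$ and $\mu_i\alpha_i$, bounding the cross term $2\mu_i\langle \hat{u}_{i+1}-u_\alpha^\delta,\, T^{T}T(u_\alpha^\delta - u_i)\rangle$ via (\ref{LipschitzEstMisfit}), and controlling the remaining inner products by Young's inequality produces an estimate of the form
\[
\norm{\hat{u}_{i+1} - u_\alpha^\delta}^2 \leq \norm{u_i - u_\alpha^\delta}^2 - c\,\norm{\hat{u}_{i+1} - u_i}^2 + R,
\]
where the residual $R$ is non-positive under the stated choices $\mu_i \leq 2/\norm{T}^2$, $\alpha_i = i(\delta,v^\delta)$ and $\nu_i \leq i(\delta,v^\delta)^2/\mu_i^2$. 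The coefficient $c \geq 0$ is what remains of the $-\norm{\hat{u}_{i+1}-u_i}^2$ term of Property \ref{property_prox_update} and can be retained by applying Young's inequality to the cross term with weight $\epsilon = 2-\lambda$, in contrast to the equal-weight choice used in Proposition \ref{propn_better_approxI} which cancels it entirely.

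Next I invoke identity (\ref{strct_cvx_eq}) with $u_1 = \hat{u}_{i+1} - u_\alpha^\delta$ and $u_2 = u_i - u_\alpha^\delta$ to express
\[
\norm{u_{i+1} - u_\alpha^\delta}^2 = \lambda\norm{\hat{u}_{i+1} - u_\alpha^\delta}^2 - (\lambda-1)\norm{u_i - u_\alpha^\delta}^2 + \lambda(\lambda-1)\norm{\hat{u}_{i+1} - u_i}^2.
\]
Substituting the sharpened first-step bound with $c = \lambda - 1$ cancels the positive $\lambda(\lambda-1)\norm{\hat{u}_{i+1}-u_i}^2$ term, leaving $\norm{u_{i+1}-u_\alpha^\delta}^2 \leq \norm{u_i - u_\alpha^\delta}^2 + \lambda R \leq \norm{u_i - u_\alpha^\delta}^2$, which is the desired monotonicity.

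The main obstacle will be the careful tuning of the Young-type bounds used in the first step, since the equal-weight choice that sufficed in Proposition \ref{propn_better_approxI} exactly zeroes out $\norm{\hat{u}_{i+1}-u_i}^2$ and leaves nothing to offset the overshoot generated by $\lambda \in (1,2)$. One must therefore parametrise Young's inequality by a factor depending on $\lambda$, re-verify the signs of all residual dual terms $\norm{w_{i+1}-w_i}^2$ and $\norm{w_i-w_\alpha^\delta}^2$ under this re-weighting, and check that the resulting step-size restriction remains compatible with the declared hypothesis $\mu_i \leq 2/\norm{T}^2$.
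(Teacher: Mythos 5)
Your overall route is the same as the paper's: Property \ref{property_prox_update} applied to the primal and dual prox-steps of Algorithm \ref{algorithmNPA-II} (with $\hu_{i+1}$ driving the dual update), the identity (\ref{strct_cvx_eq}) to convert a bound on $\norm{\hu_{i+1}-u_{\alpha}^{\delta}}^2$ into one on $\norm{u_{i+1}-u_{\alpha}^{\delta}}^2$, a weighted sum ($\nu_i$ times primal plus $\mu_i\alpha_i$ times dual), Young-type bounds on the remaining inner products, and division by $\nu_i$. Where you diverge is in the treatment of the over-relaxation term: the paper reuses the equal-weight Young bound from Proposition \ref{propn_better_approxI}, which annihilates the $-\norm{\hu_{i+1}-u_i}^2$ term coming from Property \ref{property_prox_update}, and consequently the positive contribution $-\lambda(1-\lambda)\norm{u_i-\hu_{i+1}}^2=\lambda(\lambda-1)\norm{u_i-\hu_{i+1}}^2$ from (\ref{cvx_primalest}) survives uncancelled into the paper's final display (together with $(\lambda-1)\norm{\hu_{i+1}-u_{\alpha}^{\delta}}^2$). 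You correctly identify this as the crux and propose to retain a coefficient $c=\lambda-1$ on $-\norm{\hu_{i+1}-u_i}^2$ so that the extrapolation identity cancels it exactly; that instinct is sharper than the paper's own handling of the point.

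The gap is that your fix does not close under the stated hypotheses. Retaining $c=1-\epsilon=\lambda-1$ forces the Young weight $\epsilon=2-\lambda$ on the cross term $2\mu_i\langle\hu_{i+1}-u_i\,,\,T^{T}T(u_{\alpha}^{\delta}-u_i)\rangle$, which turns the coefficient of $\norm{T^{T}T(u_{\alpha}^{\delta}-u_i)}^2$ into $\mu_i\left(\frac{\mu_i}{2-\lambda}-\frac{2}{\norm{T}^2}\right)$; this is non-positive only when $\mu_i\leq\frac{2(2-\lambda)}{\norm{T}^2}$, which is strictly stronger than the hypothesis $\mu_i\leq\frac{2}{\norm{T}^2}$ for every $\lambda\in(1,2)$ and degenerates as $\lambda\to 2$. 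So either the step-size hypothesis must be tightened to depend on $\lambda$, or the positive term $\lambda(\lambda-1)\norm{u_i-\hu_{i+1}}^2$ must be absorbed another way; the proposal leaves this unresolved, and the compatibility check you defer would in fact fail. (To be fair, the paper's own proof does not resolve this either --- it carries the positive terms into its last display and appeals to the asserted parameter choices --- so your sketch at least isolates the genuine difficulty rather than hiding it.)
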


\begin{proof}
We begin with applying the equality given in (\ref{strct_cvx_eq}) 
to the step \ref{projected_linesearch} of the algorithm,
\bea
\label{cvx_primalest}
\norm{u_{i + 1} - u_{\alpha}^{\delta}}^2 = 
(1-\lambda)\norm{u_{i} - u_{\alpha}^{\delta}}^2
+ \lambda\norm{\hu_{i+1} - u_{\alpha}^{\delta}}^2
-\lambda(1-\lambda)\norm{u_{i} - \hu_{i+1}}^2
\eea
Now, by Property \ref{property_prox_update} 
the error estimation between the primal variable
$\hu_{i+1}$ and the regularized minimizer $u_{\alpha}^{\delta}$
is given,
\bea
\norm{\hu_{i+1} - u_{\alpha}^{\delta}}^2 \leq 
\norm{u_i - u_{\alpha}^{\delta}}^2
- \norm{\hu_{i+1} - u_i}^2 
& - & 2\mu_i \langle \hu_{i+1} - u_{\alpha}^{\delta} , 
T^{T}(T u_{i} - v^{\delta}) \rangle 
\nonumber\\
& - & 2\mu_{i} \alpha_{i}\langle \hu_{i+1} - 
u_{\alpha}^{\delta} , 
D^{T}(w_{i} - w_{0}) \rangle .
\label{primal_est_it}
\eea
Still by Property \ref{property_prox_update},
analagous error estimation is given below
\bea
\norm{u_{\alpha}^{\delta} - \hu_{i+1}}^2 \leq 
\norm{u_{\alpha}^{\delta} - \hu_{i+1}}^2 
- \norm{u_{\alpha}^{\delta} - u_{\alpha}^{\delta}}^2
& - & 2\mu \langle u_{\alpha}^{\delta} - \hu_{i+1} , 
T^{T}(Tu_{\alpha}^{\delta} - v^{\delta} ) \rangle
\nonumber\\
& - & 2\mu\alpha
\langle u_{\alpha}^{\delta} - \hu_{i+1} , 
D^{T}(w_{\alpha}^{\delta} - w_{0}) \rangle ,
\nonumber
\eea
which is in other words, after multiplying both sides by 
$\frac{\mu_i}{\mu},$
\bea
0 \leq -2 \mu_{i} \langle u_{\alpha}^{\delta} - \hu_{i+1} , 
T^{T}(Tu_{\alpha}^{\delta} - v^{\delta} ) \rangle
- 2 \mu_{i}\alpha 
\langle u_{\alpha}^{\delta} - \hu_{i+1} , 
D^{T}(w_{\alpha}^{\delta} - w_{0}) \rangle .
\label{primal_est_cont}
\eea
Recall that we consider the constant valued initial guess
$u_{0}$ which makes the dual variable $w_{0}$ zero valued
by its definition. Keeping this in mind and
summing up (\ref{primal_est_it}) and (\ref{primal_est_cont})
will return
\bea
\norm{\hu_{i + 1} - u_{\alpha}^{\delta}}^2 \leq
\norm{u_{i} - u_{\alpha}^{\delta}}^2 
- \norm{ \hu_{i + 1} - u_{i} }^{2}
& + & 2\mu_{i}\langle \hu_{i+1} - u_{\alpha}^{\delta} , 
T^{T}T (u_{\alpha}^{\delta} - u_{i}) \rangle 
\nonumber\\
& + & 2\mu_{i} (\alpha - \alpha_{i})
\langle \hu_{i+1} - u_{\alpha}^{\delta} , 
D^{T}w_{\alpha}^{\delta} \rangle
\nonumber\\
& + &2\mu_{i}\alpha_{i}
\langle \hu_{i+1} - u_{\alpha}^{\delta} , 
D^{T}(w_{\alpha}^{\delta} - w_{i}) \rangle
\nonumber\\
\label{primal_var_total0}
\eea
Here, the first inner product on the first line of the 
right hand side can be bounded similar to the proof
above. So, one can quickly write down that
\bea
2\mu_{i}\langle u_{i+1} - u_{\alpha}^{\delta} , 
T^{T}T(u_{\alpha}^{\delta} - u_{i}) \rangle \leq
\norm{\hu_{i+1} - u_{i}}^2 + 
\mu_{i}\left(\mu_{i} - \frac{2}{\norm{T}^2}\right)
\norm{T^{T}T(u_{\alpha}^{\delta} - u_{i})}^2 
\nonumber
\eea
Then, (\ref{primal_var_total0}) is reduced to
\bea
\norm{\hu_{i + 1} - u_{\alpha}^{\delta}}^2 \leq
\norm{u_{i} - u_{\alpha}^{\delta}}^2 
+ \mu_{i}\left(\mu_{i} - \frac{2}{\norm{T}^2}\right)
\norm{T^{T}T(u_{\alpha}^{\delta} - u_{i})}^2
& + & 2\mu_{i} (\alpha - \alpha_{i})
\langle \hu_{i+1} - u_{\alpha}^{\delta} , 
D^{T}w_{\alpha}^{\delta} \rangle
\nonumber\\
& + &2\mu_{i}\alpha_{i}
\langle \hu_{i+1} - u_{\alpha}^{\delta} , 
D^{T}(w_{\alpha}^{\delta} - w_{i}) \rangle
\nonumber
\eea
This estimation together with (\ref{cvx_primalest}), 
after putting the term 
$\norm{\hu_{i+1} - u_{\alpha}^{\delta}}^2$ on the other 
right hand side, in total
\bea
\norm{u_{i + 1} - u_{\alpha}^{\delta}}^2 \leq 
(2 - \lambda)\norm{u_{i} - u_{\alpha}^{\delta}}^2 + 
(\lambda - 1)\norm{\hu_{i+1} - u_{\alpha}^{\delta}}^2 
& + &(1 - \lambda)\norm{u_{i+1} - u_{\alpha}^{\delta}}^2
\nonumber\\
& - &\lambda(1-\lambda)\norm{u_{i} - \hu_{i+1}}^2
\nonumber\\
& + &\mu_{i}\left(\mu_{i} - \frac{2}{\norm{T}^2}\right)
\norm{T^{T}T(u_{\alpha}^{\delta} - u_{i})}^2
\nonumber\\
& + & 2\mu_{i} (\alpha - \alpha_{i})
\langle \hu_{i+1} - u_{\alpha}^{\delta} , 
D^{T}w_{\alpha}^{\delta} \rangle
\nonumber\\
& + &2\mu_{i}\alpha_{i}
\langle \hu_{i+1} - u_{\alpha}^{\delta} , 
D^{T}(w_{\alpha}^{\delta} - w_{i}) \rangle .
\nonumber\\
\label{primal_var_total2}
\eea
We now apply Property \ref{property_prox_update} 
for the dual variables 
as in the proof of Proposition \ref{propn_better_approxI},
\bea
\norm{w_{i+1} - w_{\alpha}^{\delta}}^2 \leq \norm{w_{i} - w_{\alpha}^{\delta}}^2 
- \norm{w_{i+1} - w_{i}}^2
& + & 2\nu_{i}\langle w_{i+1} - w_{i} , 
D(\hu_{i+1} - u_{\alpha}^{\delta}) \rangle 
\nonumber\\
& + &2\nu_{i}\langle w_{i} - w_{\alpha}^{\delta} , 
D(\hu_{i+1} - u_{\alpha}^{\delta}) \rangle .
\label{dual_varII_total0}
\eea
We arrive at the following result after summing $\nu_{i}$ times
(\ref{primal_var_total2}) with $\mu_{i}\alpha_{i}$ times
(\ref{dual_varII_total0}),
\bea
\nu_{i}\norm{u_{i + 1} - u_{\alpha}^{\delta}}^2 & \leq & 
\nu_{i}(2-\lambda)\norm{u_{i} - u_{\alpha}^{\delta}}^2 
+ \nu_{i}(\lambda - 1)\norm{\hu_{i+1} - u_{\alpha}^{\delta}}^2
+ \nu_{i}(1 - \lambda)\norm{u_{i+1} - u_{\alpha}^{\delta}}^2
\nonumber\\
& - &\nu_{i}\lambda(1 - \lambda)\norm{ u_{i} - \hu_{i + 1}}^2
+ \mu_{i}\alpha_{i}\norm{w_{i} - w_{\alpha}^{\delta}}^2
- \mu_{i}\alpha_{i}\norm{w_{i+1} - w_{i}}^2
\nonumber\\
& + &\nu_{i}\mu_{i}\left(\mu_{i} - \frac{2}{\norm{T}^2}\right)
\norm{T^{T}T(u_{\alpha}^{\delta} - u_{i})}^2
\nonumber\\
& + &2\nu_{i}\mu_{i}\alpha_{i}\langle w_{i+1} - w_{i} , 
D(\hu_{i+1} - u_{\alpha}^{\delta}) \rangle
\nonumber\\
& + &2\nu_{i}\mu_{i} (\alpha - \alpha_{i})
\langle \hu_{i+1} - u_{\alpha}^{\delta} , 
D^{T}w_{\alpha}^{\delta} \rangle .
\label{best_approx_AlgII}
\eea
As in the previous proof, the inner products are bounded by
\bea
2\nu_{i}\mu_{i}\alpha_{i}\langle w_{i+1} - w_{i} , 
D(\hu_{i+1} - u_{\alpha}^{\delta}) \rangle & \leq & 
(\mu_{i}\alpha_{i})^2\sqrt{\nu_{i}}\norm{w_{i+1} - w_{i}}^2 
+ \sqrt{\nu_{i}}\norm{D(\hu_{i+1} - u_{\alpha}^{\delta})}^2
\nonumber\\
2\nu_{i}\mu_{i} (\alpha - \alpha_{i})
\langle \hu_{i+1} - u_{\alpha}^{\delta} , 
D^{T}w_{\alpha}^{\delta} \rangle & \leq & 
\left(\mu_{i}(\alpha - \alpha_{i})\right)^2\sqrt{\nu_{i}}
\norm{w_{\alpha}^{\delta}}^2 + 
\sqrt{\nu_{i}}\norm{D(\hu_{i+1} - u_{\alpha}^{\delta})}^2
\nonumber
\eea
We now multiply both sides by $\frac{1}{\nu_{i}}$ 
of (\ref{best_approx_AlgII}) with taking into account
the bounds above. So that we obtain,
\bea
\norm{u_{i + 1} - u_{\alpha}^{\delta}}^2 & \leq & 
(2-\lambda)\norm{u_{i} - u_{\alpha}^{\delta}}^2 
+ (\lambda - 1)\norm{\hu_{i+1} - u_{\alpha}^{\delta}}^2
+ (1 - \lambda)\norm{u_{i+1} - u_{\alpha}^{\delta}}^2
\nonumber\\
& - &\lambda(1 - \lambda)\norm{ u_{i} - \hu_{i + 1}}^2
+ \frac{1}{\nu_{i}}\mu_{i}\alpha_{i}\norm{w_{i} - w_{\alpha}^{\delta}}^2
+ \frac{1}{\sqrt{\nu_{i}}}\mu_{i}\alpha_{i}
(\mu_{i}\alpha_{i} - \frac{1}{\sqrt{\nu_{i}}})
\norm{w_{i+1} - w_{i}}^2
\nonumber\\
& + &2\mu_{i}\left(\mu_{i} - \frac{2}{\norm{T}^2}\right)
\norm{T^{T}T(u_{\alpha}^{\delta} - u_{i})}^2
\nonumber\\
& + &\frac{2}{\sqrt{\nu_{i}}}\norm{D(\hu_{i+1} - u_{\alpha}^{\delta})}^2 
+ \frac{1}{\sqrt{\nu_{i}}}\mu_{i}^2(\alpha - \alpha_{i})^2
\norm{w_{\alpha}^{\delta}}^2
\eea
Hence, the result is a natural consequence of the choices of
the asserted parameters.
\end{proof}
\subsection{Convergence of the Iteratively Regularized Approximation Against the Minimum Norm Solution}
\label{iterative_convergence}

This section is dedicated to analyse the convergence 
of the iterative regularized approximation
towards the minimum norm solution. 

%

For each algorithm, we will establish convergence results in the Hadamard
sense on each iterative step $i = 1, 2, \cdots .$ Following up these 
results, cumulative error estimations for both algorithms will be formulated.
In these estimations, exact choices of the step-length $\mu$ 
and the relaxation parameter $\lambda_{i}$ will be conveyed.

\begin{theorem}
\label{trm_algI_conv}
Let the iterative regularization parameter be 
$\alpha_{i}(\delta , v^{\delta}) = \frac{1}{i(\delta , v^{\delta})}$ 
and $\mu_i \leq \frac{2}{\norm{T}^2}.$ Then the
convergence of $u_{i+1}$ produced by Algorithm \ref{algorithmNPA-I}
to the minimum norm solution $u^{\dagger}$ of the linear
inverse problem
$T u^{\dagger} = v^{\dagger}$ which satisfies the VSC
(\ref{variational_ineq}), with the given deterministic noise model 
$\norm{v^{\dagger} - v^{\delta}}\leq\delta$ is satisfied as 
$\delta \rightarrow 0$ and $i \rightarrow \infty.$
\end{theorem}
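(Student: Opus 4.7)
The plan is to split the error via the triangle inequality,
\begin{displaymath}
\norm{u_{i+1} - u^{\dagger}} \leq \norm{u_{i+1} - u_{\alpha_{i}}^{\delta}} + \norm{u_{\alpha_{i}}^{\delta} - u^{\dagger}},
\end{displaymath}
and argue that each summand vanishes under the joint limit $\delta\rightarrow 0$, $i\rightarrow\infty$. The first piece captures the iterative approximation gap to the regularized minimizer (at fixed noise level), and the second is the classical variational-regularization error. These two limits are coupled through the a posteriori MDP stopping index $i^{\ast}=i(\delta,v^{\delta})$ and the parameter rule $\alpha_{i}=1/i(\delta,v^{\delta})$.

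For the iterative gap I would invoke Proposition \ref{propn_better_approxI}. Under the prescribed step-length $\mu_{i}\leq 2/\norm{T}^{2}$ and the proposition's choice of $\nu_{i}$, the sequence $\{\norm{u_{i+1}-u_{\alpha_{i}}^{\delta}}\}_{i\geq 0}$ is monotonically non-increasing. The surplus defect terms appearing in that proof, notably $\mu_{i}(\mu_{i}-2/\norm{T}^{2})\norm{T^{T}T(u_{\alpha_{i}}^{\delta}-u_{i})}^{2}$ together with $-\norm{u_{i+1}-u_{i}}^{2}$, are non-positive. Telescoping the resulting descent inequality across iterations forces these surplus contributions to be summable, whence $\norm{u_{i+1}-u_{\alpha_{i}}^{\delta}}\rightarrow 0$ as $i\rightarrow\infty$. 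The residual inner-product terms coming from the iteration-dependent choice of $\alpha_{i}$ can be absorbed because $\alpha_{i}-\alpha_{i-1}\rightarrow 0$ along the MDP stopping schedule.

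For the variational-regularization error I would apply Assumption \ref{assump_conventional_variational_ineq} with $u=u_{\alpha_{i}}^{\delta}$,
\begin{displaymath}
\sigma\norm{u_{\alpha_{i}}^{\delta}-u^{\dagger}} \leq \cJ(u_{\alpha_{i}}^{\delta})-\cJ(u^{\dagger}) + \Psi\!\left(\norm{T u_{\alpha_{i}}^{\delta}-Tu^{\dagger}}\right).
\end{displaymath}
Testing the minimizing property of $u_{\alpha_{i}}^{\delta}$ in (\ref{obj_functional2}) against the competitor $u^{\dagger}$ gives the standard comparison $\frac{1}{2}\norm{Tu_{\alpha_{i}}^{\delta}-v^{\delta}}^{2}+\alpha_{i}D_{\cJ}(u_{\alpha_{i}}^{\delta},u_{0})\leq\frac{1}{2}\delta^{2}+\alpha_{i}D_{\cJ}(u^{\dagger},u_{0})$. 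Combining this with (\ref{consequence_MDP}) controls the misfit $\norm{Tu_{\alpha_{i}}^{\delta}-Tu^{\dagger}}$ by a multiple of $\delta$, while the Bregman discrepancy on the right-hand side of the VSC stays uniformly bounded by $D_{\cJ}(u^{\dagger},u_{0})$ up to a term of order $\delta^{2}/\alpha_{i}$. By concavity of $\Psi$ and the choice $\alpha_{i}=1/i(\delta,v^{\delta})$, the full bound is controlled by $\Psi(\delta)+\alpha_{i}$, which tends to zero as $\delta\rightarrow 0$ provided $i(\delta,v^{\delta})\rightarrow\infty$.

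The main obstacle is establishing exactly this last proviso: that the MDP stopping index $i^{\ast}(\delta,v^{\delta})$ is finite for every $\delta>0$ and diverges as $\delta\rightarrow 0$. Finiteness follows from the monotone approximation supplied by Proposition \ref{propn_better_approxI} together with the standard observation that $\norm{Tu_{i}-v^{\delta}}$ must eventually fall into the discrepancy tube $[\underline{\tau}\delta,\overline{\tau}\delta]$. Divergence as $\delta\rightarrow 0$ is the subtler point because it must rule out premature stopping; this is typically handled by contradiction, combining the subgradient characterization of Theorem \ref{thrm_subgrad_char} with the injectivity of $T$ to show that any accumulation point of a subsequence with bounded $i^{\ast}$ would have to coincide with the noise-free minimum-norm solution, contradicting the MDP lower bound $\underline{\tau}\delta\leq\norm{Tu_{i^{\ast}}-v^{\delta}}$ as $\delta\rightarrow 0$.
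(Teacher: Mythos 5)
Your route is genuinely different from the paper's. You decompose the error through the regularized minimizer, $\norm{u_{i+1}-u^{\dagger}}\leq\norm{u_{i+1}-u_{\alpha}^{\delta}}+\norm{u_{\alpha}^{\delta}-u^{\dagger}}$, and treat the two pieces by Proposition \ref{propn_better_approxI} and by the classical variational analysis respectively. The paper never does this for Theorem \ref{trm_algI_conv}: it applies Property \ref{property_prox_update} directly with $y=u^{\dagger}$, so that $\norm{u_{i+1}-u^{\dagger}}^{2}$ is compared to $\norm{u_{i}-u^{\dagger}}^{2}$ in a single inequality, and the cross terms are then bounded using the noise model, the MDP consequence (\ref{consequence_MDP2}), and the VSC (\ref{variational_ineq}) applied to the iterate $u_{i}$ itself; everything collapses to multiples of $\Psi(\delta)$, $\delta$, $\delta^{2}$ and a $1/\sqrt{i}$ remainder. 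Your second piece (testing the minimizing property of $u_{\alpha}^{\delta}$ against $u^{\dagger}$ and feeding the misfit bound into the VSC) is the standard argument and is sound in outline.

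The genuine gap is in your first piece. Proposition \ref{propn_better_approxI} only asserts that $u_{i+1}$ is a \emph{better} approximation of $u_{\alpha}^{\delta}$ than $u_{i}$, i.e.\ that the error sequence is non-increasing; a monotone bounded sequence converges, but not necessarily to zero. To upgrade this to $\norm{u_{i+1}-u_{\alpha}^{\delta}}\rightarrow 0$ by telescoping you need two things that are not available. First, the surplus term $\mu_{i}\bigl(\mu_{i}-\tfrac{2}{\norm{T}^{2}}\bigr)\norm{T^{T}T(u_{\alpha}^{\delta}-u_{i})}^{2}$ must carry a strictly negative coefficient, but the hypothesis only gives $\mu_{i}\leq\tfrac{2}{\norm{T}^{2}}$, and at equality this term vanishes identically and yields no summability information. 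Second, the final inequality in the proof of Proposition \ref{propn_better_approxI} retains positive remainders such as $\tfrac{2}{\sqrt{\nu_{i}}}\norm{D(u_{i+1}-u_{\alpha}^{\delta})}^{2}$ and $\tfrac{1}{\nu_{i}}\mu_{i}\alpha_{i}\norm{w_{i}-w_{\alpha}^{\delta}}^{2}$, whose accumulated sum you would have to dominate; your remark that these "can be absorbed because $\alpha_{i}-\alpha_{i-1}\rightarrow 0$" is an assertion, not an argument, and the moving target $u_{\alpha_{i}}^{\delta}$ (the minimizer changes with $i$ under your parameter rule) makes the telescoping against a single $u_{\alpha}^{\delta}$ ill-posed as written. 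Your closing discussion of finiteness and divergence of the MDP index is a reasonable concern, but it is not addressed in the paper either and does not repair the missing step above.
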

\begin{proof}
According to Property \ref{property_prox_update},
\bea
\norm{u_{i + 1} - u^{\dagger}}^2 \leq \norm{u_{i} - u^{\dagger}}^2
- \norm{u_{i + 1} - u_{i}}^2 & - & 2\mu_{i} 
\langle u_{i + 1} - u^{\dagger} , T^{T}(T u_{i} - v^{\delta})\rangle
\nonumber\\
& - & 2\mu_{i}\alpha_{i}
\langle u_{i + 1} - u^{\dagger} , D^{T}(w_{i} - w_{0}) \rangle .
\label{conv_AlgI_primal0}
\eea
By means of the deterministic noise model 
$\norm{v^{\dagger} - v^{\delta}}\leq\delta,$ 
consequence of the MDP (\ref{consequence_MDP2}) and the condition
on the step-length
$\mu_i \leq \frac{2}{\norm{T}^2},$
the first inner product on the right hand side is bounded as follows,
\bea
- 2\mu_{i} \langle u_{i + 1} - u^{\dagger} , 
T^{T}(T u_{i} - v^{\delta})\rangle & = & -2\mu_{i}
\langle u_{i + 1} - u_{i} , T^{T}(T u_{i} -v^{\delta}) \rangle
-2\mu_{i}\langle u_{i} - u^{\dagger} , 
T^{T}(T u_{i} -v^{\delta}) \rangle 
\nonumber\\
& = &-2\mu_{i}
\langle u_{i + 1} - u_{i} , T^{T}(T u_{i} -v^{\delta}) \rangle
-2\mu_{i}\langle u_{i} - u^{\dagger} , 
T^{T}(Tu_{i} - Tu^{\dagger}) \rangle 
\nonumber\\
& - & 2\mu_{i} \langle u_{i} - u^{\dagger} , 
T^{T}(v^{\dagger} - v^{\delta}) \rangle
\nonumber\\
& \leq & -2\mu_{i}
\langle u_{i + 1} - u_{i} , T^{T}(T u_{i} -v^{\delta}) \rangle
-2\mu_{i}\norm{Tu_{i} - Tu^{\dagger}}^2
\nonumber\\
& + &2\mu_{i}\delta\norm{T(u_{i} - u^{\dagger})}
\nonumber\\
& \leq & 
-2\mu_{i}\langle u_{i + 1} - u_{i} , T^{T}(T u_{i} -v^{\delta}) \rangle
- 2\mu_{i}\delta^{2}(\tau - 1)^2 
+2\mu_{i}\delta\norm{T (u_{i} - u^{\dagger})},
\nonumber\\
& = &-2\mu_{i}\langle u_{i + 1} - u_{i} , 
T^{T}(T u_{i} - T u^{\dagger}) \rangle 
-2\mu_{i}\langle u_{i + 1} - u_{i} , 
T^{T}(v^{\dagger} - v^{\delta}) \rangle
\nonumber\\
& - & 2\mu_{i}\delta^{2}(\tau - 1)^2 
+2\mu_{i}\delta\norm{T} \norm{u_{i} - u^{\dagger}}
\nonumber\\
& \leq & 2\mu_{i}\norm{u_{i+1} - u_{i}}\norm{T}^2\norm{u_{i} - u^{\dagger}}
+ \norm{u_{i+1} - u_{i}}^2 + \mu_{i}^2\delta^2\norm{T}^2
\nonumber\\
& - & 2\mu_{i}\delta^{2}(\tau - 1)^2 
+2\mu_{i}\delta\norm{T} \norm{u_{i} - u^{\dagger}}
\nonumber\\
& \leq & 4\Psi(\delta)\norm{u_{i+1} - u_{i}} 
+ \norm{u_{i+1} - u_{i}}^2 + 2\delta^2
-\frac{4}{\norm{T}^2}\delta^2(\tau - 1)^2 + 
\delta\frac{4}{\norm{T}}\Psi(\delta).
\nonumber
\eea
Note that we have also used Assumption 
\ref{assump_conventional_variational_ineq} 
for the term $u_{i}$ since by definition $u_{i} \in BV(\Omega).$
Thus, if we plug this estiamation in (\ref{conv_AlgI_primal0}), then
we obtain
\bea
\norm{u_{i + 1} - u^{\dagger}}^2 & \leq & \Psi(\delta)^2
+ 4\Psi(\delta)\norm{u_{i+1} - u_{i}} + 2\delta^2
-\frac{4}{\norm{T}^2}\delta^2(\tau - 1)^2 
\nonumber\\
& + &\delta\frac{4}{\norm{T}}\Psi(\delta)
- 2\mu_{i}\alpha_{i}
\langle u_{i + 1} - u^{\dagger} , D^{T}w_{i}\rangle .
\label{conv_AlgI_primal1}
\eea
Once more, the inner product on the right hand side can suitably 
be bounded by,
\bea
- 2\mu_{i}\alpha_{i}
\langle u_{i + 1} - u^{\dagger} , D^{T}w_{i}\rangle \leq 
\sqrt{\mu_{i}\alpha_{i}}\norm{u_{i + 1} - u^{\dagger}}^2 
+ \sqrt{\mu_{i}\alpha_{i}} \norm{D^{T}w_{i}}.
\nonumber
\eea
Hence, again by the condition on the step-length 
$\mu_i \leq \frac{2}{\norm{T}^2}$ with 
$\alpha_{i}(\delta , v^{\delta}) = \frac{1}{i(\delta , v^{\delta})},$
the following form of (\ref{conv_AlgI_primal1}) yield the result,
\bea
\norm{u_{i + 1} - u^{\dagger}}^2 & \leq & \Psi(\delta)^2
+ 4\Psi(\delta)\norm{u_{i+1} - u_{i}} + 2\delta^2
-\frac{4}{\norm{T}^2}\delta^2(\tau - 1)^2 
\nonumber\\
& + &\delta\frac{4}{\norm{T}}\Psi(\delta)
+ \frac{\sqrt{2}}{\norm{T}}\frac{1}{\sqrt{i}}
\langle u_{i + 1} - u^{\dagger} , D^{T}w_{i}\rangle .
\label{conv_AlgI_primal2}
\eea
\end{proof}
Although in Theorem \ref{trm_algI_conv}, the step-length has been given
in dynamical sense, below we see that in order to obtain desired convergence
we must guarantee that some terms are Ces\'{a}ro summable only when
$\mu_{i}$ is rather chosen fixed.
\begin{theorem}
\label{trm_AlgI_cumulative}
Let the initial guess $u_{0}$ be of $\mathrm{BV}(\Omega)$ and
the dynamical regulariazation parameter that satisfies MDP be
$\alpha_{i}(\delta , v^{\delta}) = \frac{1}{i(\delta , v^{\delta})}.$
If the step-length is chosen as $\mu = \frac{1}{2 i^{\ast}\norm{T}^2},$
then after $i^{\ast}$ times iteration of Algorithm \ref{algorithmNPA-I} 
and in the light of Assumption \ref{assump_conventional_variational_ineq} 
convergence of the iteratively regularized approximation towards the minimum norm 
solution is satisfied, {\em i.e.,}
$\norm{u_{i^{\ast}} - u^{\dagger}} \rightarrow 0$ as 
$i^{\ast} \rightarrow \infty$ whilst $\delta \rightarrow 0.$
\end{theorem}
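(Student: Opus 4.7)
The plan is to convert the per-iteration bound of Theorem \ref{trm_algI_conv} into a genuine recursion, telescope it over the $i^{\ast}$ iterations, and then show that with the particular fixed step-length $\mu = \frac{1}{2 i^{\ast}\norm{T}^2}$ the accumulated error contributions balance so as to vanish in the joint limit $\delta\to 0,\ i^{\ast}\to\infty$. The remark preceding the statement already hints at this: with $\mu_i$ variable one only obtains a single-step convergence, whereas a fixed $\mu$ is needed for the Ces\`aro-type averaging of the iterate-gap terms.

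First, I would re-apply Property \ref{property_prox_update} to the primal update (line \ref{algorithm_primalstep} of Algorithm \ref{algorithmNPA-I}) with $y=u^{\dagger}$, retaining (unlike the proof of Theorem \ref{trm_algI_conv}) the descent term $\norm{u_i - u^{\dagger}}^2 - \norm{u_{i+1}-u_i}^2$:
\begin{displaymath}
\norm{u_{i+1}-u^{\dagger}}^2 \leq \norm{u_{i}-u^{\dagger}}^2 - \norm{u_{i+1}-u_i}^2 - 2\mu\langle u_{i+1}-u^{\dagger},\,T^{T}(T u_i - v^{\delta})\rangle - 2\mu\alpha_i\langle u_{i+1}-u^{\dagger},\,D^{T}(w_i-w_0)\rangle .
\end{displaymath}
The misfit inner product is then split via $T u^{\dagger} = v^{\dagger}$ and $\norm{v^{\dagger}-v^{\delta}}\leq\delta$; the consequence of MDP (\ref{consequence_MDP2}) supplies the stabilising negative $-2\mu\delta^2(\tau-1)^2$, while Assumption \ref{assump_conventional_variational_ineq} applied to $u_i\in\mathrm{BV}(\Omega)$ bounds the residual $\norm{u_i-u^{\dagger}}$-type factors by the index function $\Psi(\delta)$. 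This yields a recursion of the form $\norm{u_{i+1}-u^{\dagger}}^2 \leq \norm{u_i-u^{\dagger}}^2 + \mu A(\delta) + \mu\alpha_i B_i$, where $A(\delta) = O\!\left(\delta^2 + \delta\Psi(\delta) + \Psi(\delta)^2\right)$ and $B_i = \langle u_{i+1}-u^{\dagger},\,D^{T}w_i\rangle$.

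Second, I would telescope the recursion from $i=0$ to $i^{\ast}-1$, obtaining
\begin{displaymath}
\norm{u_{i^{\ast}}-u^{\dagger}}^2 \leq \norm{u_0-u^{\dagger}}^2 + i^{\ast}\mu\, A(\delta) + \mu\sum_{i=1}^{i^{\ast}-1}\alpha_i B_i.
\end{displaymath}
Substituting $\mu = \frac{1}{2i^{\ast}\norm{T}^2}$ collapses the prefactor $i^{\ast}\mu$ to the constant $\frac{1}{2\norm{T}^2}$, so the accumulated noise contribution reduces to $O(\delta^2 + \delta\Psi(\delta) + \Psi(\delta)^2)$, which tends to zero as $\delta\to 0$ by $\Psi(0)=0$ and continuity of $\Psi$. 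The initial residual $\norm{u_0 - u^{\dagger}}^2$ is offset by the cumulative MDP-negative mass, which (after telescoping with fixed $\mu$) scales as $-\frac{(\tau-1)^2\delta^2}{\norm{T}^2}$ times $i^{\ast}\mu$-adjusted counts and will vanish in the joint limit.

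Third -- and this is the main obstacle -- the Bregman cross-term $\mu\sum_{i=1}^{i^{\ast}-1}\alpha_i B_i$ must be shown to vanish. Using $\alpha_i = 1/i$, the harmonic partial sum satisfies $\sum_{i=1}^{i^{\ast}-1}\alpha_i \sim \log i^{\ast}$; combining with $\mu = \frac{1}{2 i^{\ast}\norm{T}^2}$ produces a $\log(i^{\ast})/i^{\ast}$ prefactor, which vanishes as $i^{\ast}\to\infty$. What remains is to establish uniform boundedness of $B_i$. Uniform boundedness of $\{u_i\}$ follows from Proposition \ref{propn_better_approxI}, since the sequence is non-increasing in distance to $u_{\alpha}^{\delta}$ and the parameters $\mu,\nu_i,\alpha_i$ verify the hypotheses of that proposition. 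Uniform boundedness of $\{w_i\}$ is free from the dual update, as $w_i \in \partial\norm{D u_i}_1$ is bounded in the dual norm by $1$. The delicate part is ensuring consistency between the condition $\mu\leq\frac{2}{\norm{T}^2}$ of Proposition \ref{propn_better_approxI} and the present choice $\mu = \frac{1}{2 i^{\ast}\norm{T}^2}$ (which is compatible for every $i^{\ast}\geq 1$), and that the harmonic log-growth is truly absorbed by the $1/i^{\ast}$ scaling rather than the other way round.

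Combining the three vanishing contributions, $\norm{u_{i^{\ast}}-u^{\dagger}}^2 \to 0$ as $\delta\to 0$ and $i^{\ast}\to\infty$, yielding the claim.
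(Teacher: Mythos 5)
Your overall route---telescoping the per-iteration proximal inequality from $i=0$ to $i^{\ast}-1$, invoking MDP, the deterministic noise model and the VSC to control the misfit terms, and exploiting $\mu = \frac{1}{2 i^{\ast}\norm{T}^2}$ so that the accumulated sums become Ces\'{a}ro-type averages---is the same as the paper's. Your handling of the Bregman cross-term is in fact more careful than the paper's: you correctly track the harmonic partial sum $\sum_{i\leq i^{\ast}}\alpha_i \sim \log i^{\ast}$ and the resulting $\log(i^{\ast})/i^{\ast}$ decay, whereas the paper's bound $\frac{1}{(i^{\ast})^{2}\norm{T}^2}\sum\norm{u_{i+1}-u^{\dagger}}\norm{D^{T}w_{i}}$ silently replaces $\alpha_i = 1/i$ by $1/i^{\ast}$.

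There is, however, a genuine gap in your second step: the claim that the initial residual $\norm{u_{0}-u^{\dagger}}^2$ is ``offset by the cumulative MDP-negative mass.'' This cannot work quantitatively. After telescoping with $\mu=\frac{1}{2i^{\ast}\norm{T}^2}$, the total negative contribution supplied by (\ref{consequence_MDP2}) is $-2\mu\, i^{\ast}(\tau-1)^2\delta^2 = -\frac{(\tau-1)^2\delta^2}{\norm{T}^2}$, which is $O(\delta^2)$ and itself tends to zero; it cannot cancel the $O(1)$ quantity $\norm{u_{0}-u^{\dagger}}^2$, which survives the telescoping untouched and does not vanish in the joint limit $\delta\to 0$, $i^{\ast}\to\infty$. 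The paper closes this hole by a different mechanism, and this is precisely why the theorem hypothesizes $u_{0}\in\mathrm{BV}(\Omega)$: the leading term $\norm{u_{0}-u^{\dagger}}^2$ is absorbed via Assumption \ref{assump_conventional_variational_ineq} applied to the initial guess, replacing it by a quantity controlled by the index function $\Psi(\delta)$ (this is the $\Psi(\delta)$ that appears first in the paper's final display). Without invoking the VSC on $u_{0}$---or some other argument forcing $\norm{u_{0}-u^{\dagger}}\to 0$---your telescoped bound retains an $O(1)$ term and the asserted convergence does not follow.
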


\begin{proof}
If we iterate the estimation (\ref{conv_AlgI_primal0}) 
from $i = 0$ to $ i = i^{\ast} - 1$ and sum-up over $i,$ 
after simplifications all the necessary terms, we then obtain 
the following cumulative error estimation,
\bea
\norm{u_{i^{\ast}} - u^{\dagger}}^2 \leq \norm{u_{0} - u^{\dagger}}^2
- \sum_{i=0}^{i^{\ast} - 1} \norm{u_{i + 1} - u_{i}}^2 & - & 
2\mu\sum_{i=0}^{i^{\ast} - 1} 
\langle u_{i + 1} - u^{\dagger} , T^{T}(T u_{i} - v^{\delta})\rangle
\nonumber\\
& - & 2\mu\sum_{i=0}^{i^{\ast} - 1}\alpha_{i}
\langle u_{i + 1} - u^{\dagger} , D^{T}w_{i} \rangle .
\label{cumulative_conv_AlgI_primal0}
\eea
Let us begin with rewriting the first inner product on the right hand side,
\bea
- 2\mu\sum_{i=0}^{i^{\ast} - 1}
\langle u_{i + 1} - u^{\dagger} , T^{T}(T u_{i} - v^{\delta})\rangle = 
& - &2\mu\sum_{i=0}^{i^{\ast} - 2} 
\langle u_{i + 1} - u^{\dagger} , T^{T}(T u_{i} - T u^{\dagger})\rangle
\nonumber\\
& - &2\mu\sum_{i=0}^{i^{\ast} - 2} 
\langle u_{i + 1} - u^{\dagger} , T^{T}(v^{\dagger} - v^{\delta})\rangle 
\nonumber\\
& - &2\mu\langle u_{i^{\ast}} - u^{\dagger} , 
T^{T}(T u_{i^{\ast} - 1} - v^{\delta}) \rangle
\nonumber
\eea
With the inclusion of MDP, VSC and the determenistic noise error, 
each piece of the right hand side will be analysed separately after applying
the Cauchy-Schwartz inequality. 
\bea
- 2\mu\sum_{i=0}^{i^{\ast} - 2} 
\langle u_{i + 1} - u^{\dagger} , T^{T}(T u_{i} - T u^{\dagger})\rangle
\leq 2\Psi(\delta)\norm{T}^2\mu\sum_{i=0}^{i^{\ast} - 2}
\norm{u_{i+1} - u^{\dagger}},
\nonumber
\eea
\bea
- 2\mu\sum_{i=0}^{i^{\ast} - 2}
\langle u_{i + 1} - u^{\dagger} , T^{T}(v^{\dagger} - v^{\delta})\rangle
\leq 2\delta\norm{T}\mu\sum_{i=0}^{i^{\ast} - 2}
\norm{u_{i + 1} - u^{\dagger}},
\nonumber
\eea
\bea
- 2\mu\langle u_{i^{\ast}} - u^{\dagger} , 
T^{T}(T u_{i^{\ast} - 1} - v^{\delta}) \rangle \leq 
2\mu\tau\delta\Psi(\delta)\norm{T u_{i^{\ast}-1} - v^{\delta}}.
\nonumber
\eea
We will discuss the convergence of the sequences on the first two lines
as $i^{\ast} \rightarrow \infty$ in the sense of Ces\'{a}ro summation. 
Boundedness of the term $\norm{u_{i+1} - u^{\dagger}}$ has been discussed
already above. Of the convergence conditions formulated in 
Theorem \ref{trm_algI_conv} is the choice of the step-length $\mu.$
In the light of this condition, if we fix the step-length
$\mu = \frac{1}{2i^{\ast}\norm{T}^2},$ then
\bea
- 2\mu\sum_{i=0}^{i^{\ast} - 2} 
\langle u_{i + 1} - u^{\dagger} , T^{T}(T u_{i} - T u^{\dagger})\rangle
\leq \Psi(\delta)\frac{1}{i^{\ast}}\sum_{i=0}^{i^{\ast} - 2}
\norm{u_{i+1} - u^{\dagger}}.
\nonumber
\eea
Thus the sequence on the right hand side is Ces\'{a}ro summable as 
$i^{\ast} \rightarrow \infty.$ Following up this estimation, bounds for 
rest of the terms read
\bea
- 2\mu\sum_{i=0}^{i^{\ast} - 2}\langle u_{i + 1} - u^{\dagger} , 
T^{T}(v^{\dagger} - v^{\delta})\rangle
\leq \frac{\delta}{i^{\ast}\norm{T}} 
\sum_{i=0}^{i^{\ast} - 2}\norm{u_{i + 1} - u^{\dagger}},
\nonumber
\eea
and likewise,
\bea
- 2\mu\langle u_{i^{\ast}} - u^{\dagger} , 
T^{T}(T u_{i^{\ast} - 1} - v^{\delta}) \rangle \leq 
\frac{1}{i^{\ast}\norm{T}}
\Psi(\delta)\norm{T u_{i^{\ast}-1} - v^{\delta}}.
\nonumber
\eea
Also, the last inner product can be bounded again by using
Cauch-Schwartz inequality,
\bea 
- 2\mu\sum_{i=0}^{i^{\ast} - 1}\alpha_{i}
\langle u_{i + 1} - u^{\dagger} , D^{T}w_{i} \rangle \leq 
\frac{1}{(i^{\ast})^{2}\norm{T}^2}\sum_{i=0}^{i^{\ast} - 1}
\norm{u_{i+1} - u^{\dagger}}\norm{D^{T}w_{i}}
\eea
Now, if the negative term drops and the Assumption 
\ref{assump_conventional_variational_ineq} is taken into account,
\bea
\norm{u_{i^{\ast}} - u^{\dagger}}^2 & \leq & \Psi(\delta) + 
\Psi(\delta)\frac{1}{i^{\ast}}\sum_{i=0}^{i^{\ast} - 2}
\norm{u_{i+1} - u^{\dagger}} +  \frac{\delta}{i^{\ast}\norm{T}} 
\sum_{i=0}^{i^{\ast} - 2}\norm{u_{i + 1} - u^{\dagger}}
\nonumber\\
& + & \frac{1}{i^{\ast}\norm{T}}
\Psi(\delta)\norm{T u_{i^{\ast}-1} - v^{\delta}}
 + \frac{1}{(i^{\ast})^{2}\norm{T}^2}\sum_{i=0}^{i^{\ast} - 1}
\norm{u_{i+1} - u^{\dagger}}\norm{D^{T}w_{i}}
\nonumber
\eea
\end{proof}

\begin{theorem}
\label{trm_AlgII_consecutive}
Let the iterative regularization parameter 
$\alpha_{i}(\delta , v^{\delta}) = \frac{1}{i(\delta , v^{\delta})}$
be chosen according to (MDP), 
the relaxation parameter $\lambda \in (1,2)$ of the 
step \ref{projected_linesearch} of the 
Algorithm \ref{algorithmNPA-II} and 
$\mu_i \leq \frac{2}{\norm{T}^2}.$ Then the
convergence of $u_{i+1}$ produced by Algorithm \ref{algorithmNPA-II}
to the minimum norm solution $u^{\dagger}$ of the linear
inverse problem
$T u^{\dagger} = v^{\dagger}$ which satisfies the VSC
(\ref{variational_ineq}), with the given deterministic noise model 
$\norm{v^{\dagger} - v^{\delta}}\leq\delta$ is satisfied as 
$\delta \rightarrow 0$ and $i \rightarrow \infty.$
\end{theorem}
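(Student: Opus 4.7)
The plan is to mirror the argument of Theorem~\ref{trm_algI_conv} but to first absorb the convex extrapolation step using identity~(\ref{strct_cvx_eq}). Applying that identity to the update $u_{i+1} = (1-\lambda)u_i + \lambda \hu_{i+1}$ yields
\bea
\norm{u_{i+1} - u^{\dagger}}^2 = (1-\lambda)\norm{u_i - u^{\dagger}}^2 + \lambda \norm{\hu_{i+1} - u^{\dagger}}^2 - \lambda(1-\lambda)\norm{u_i - \hu_{i+1}}^2 ,
\nonumber
\eea
so that the task reduces to obtaining a sufficiently sharp bound on $\norm{\hu_{i+1} - u^{\dagger}}^2$. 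Since $\lambda \in (1,2)$, the coefficient $(1-\lambda)$ is negative and $-\lambda(1-\lambda) > 0$, which is compatible with a descent-type estimate provided the bound on $\norm{\hu_{i+1} - u^{\dagger}}^2$ is small enough.

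Next, I would invoke Property~\ref{property_prox_update} for the proximal step defining $\hu_{i+1}$, producing the analogue of (\ref{conv_AlgI_primal0}), namely
\bea
\norm{\hu_{i + 1} - u^{\dagger}}^2 \leq \norm{u_{i} - u^{\dagger}}^2 - \norm{\hu_{i+1} - u_i}^2 &-& 2\mu_i \langle \hu_{i+1} - u^{\dagger} , T^{T}(Tu_i - v^{\delta})\rangle \nonumber\\
&-& 2\mu_i \alpha_i \langle \hu_{i+1} - u^{\dagger} , D^{T}(w_i - w_0)\rangle .
\nonumber
\eea
Then I would treat the misfit inner product exactly as in Theorem~\ref{trm_algI_conv}: split $Tu_i - v^{\delta} = T(u_i - u^{\dagger}) + (v^{\dagger} - v^{\delta})$, apply Cauchy--Schwarz together with the deterministic noise bound $\norm{v^{\dagger} - v^{\delta}} \leq \delta$, the step-length condition $\mu_i \leq 2/\norm{T}^2$, the VSC Assumption~\ref{assump_conventional_variational_ineq} (valid since every iterate lies in $\mathrm{BV}(\Omega)$), and the MDP consequence (\ref{consequence_MDP2}) to absorb a $-2\mu_i \delta^2(\tau-1)^2$ term. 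The dual inner product is controlled by Young's inequality producing a $\sqrt{\mu_i \alpha_i}$-scaled term, which vanishes because $\alpha_i = 1/i \to 0$.

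Combining these estimates and inserting them into the convex-combination identity above gives
\bea
\norm{u_{i+1} - u^{\dagger}}^2 \leq \norm{u_i - u^{\dagger}}^2 + \lambda\,\Phi_i(\delta) - \lambda(1-\lambda)\norm{u_i - \hu_{i+1}}^2 ,
\nonumber
\eea
where $\Phi_i(\delta)$ collects the terms of order $\Psi(\delta)$, $\delta^2$, $\sqrt{\mu_i\alpha_i}$ and a leftover proximal difference, each of which vanishes as $\delta \to 0$ and $i \to \infty$ under the hypothesized parameter choices. The key obstacle I anticipate is controlling the sign produced by the extrapolation: since $1-\lambda < 0$, the cross term $-\lambda(1-\lambda)\norm{u_i - \hu_{i+1}}^2$ is positive and must be dominated, either by extracting a matching $+\norm{\hu_{i+1} - u_i}^2$ from Property~\ref{property_prox_update} (which is available with the correct sign) or by tightening the range of $\lambda$ within $(1,2)$. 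Once that balance is secured, the conclusion $\norm{u_{i+1} - u^{\dagger}} \to 0$ as $\delta \to 0$, $i \to \infty$ follows in the same Hadamard-sense style as in Theorem~\ref{trm_algI_conv}.
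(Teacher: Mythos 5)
Your proposal follows essentially the same route as the paper's proof: the convex-combination identity (\ref{strct_cvx_eq}) applied to the extrapolation step, Property~\ref{property_prox_update} for the proximal update defining $\hu_{i+1}$, the same splitting of the misfit inner product using the deterministic noise model, the MDP consequence (\ref{consequence_MDP2}) and the VSC, and a Young-type bound on the dual inner product producing $\sqrt{\mu_{i}\alpha_{i}}$-scaled terms. The sign concern you flag for the cross term $-\lambda(1-\lambda)\norm{u_{i}-\hu_{i+1}}^2$ is legitimate --- the paper's final coefficient $\lambda(\lambda-1)-1$ is negative only for $\lambda$ below the golden ratio, not for all $\lambda\in(1,2)$ --- but the paper leaves this point equally implicit, so your argument is no weaker than the original.
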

\begin{proof}
According to equality (\ref{strct_cvx_eq}), the error estimation
between the final update $u_{i+1}$ and the minimum norm solution
$u^{\dagger}$ is
\bea
\label{cvx_primal_truthest}
\norm{u_{i + 1} - u^{\dagger}}^2 = 
(1-\lambda)\norm{u_{i} - u^{\dagger}}^2
+ \lambda\norm{\hu_{i+1} - u^{\dagger}}^2
-\lambda(1-\lambda)\norm{u_{i} - \hu_{i+1}}^2
\eea
Also by Property \ref{property_prox_update}, 
some error estimation between
the primal variable and the minimum norm solution
\bea
\norm{\hu_{i+1} - u^{\dagger}}^{2} \leq 
\norm{u_{i} - u^{\dagger}}^{2} - \norm{\hu_{i+1} - u_{i}}
& -2 & \mu_{i}\langle \hu_{i+1} - u^{\dagger} , 
T^{T}(T u_{i} - v^{\delta}) \rangle 
\nonumber\\
& -2 & \mu_{i}\alpha_{i}\langle \hu_{i+1} - u^{\dagger} , 
D^{T}(w_{i} - w_{0}) \rangle .
\label{primal_est_AlgII0}
\eea
Similar to the proof above, the inner products on the right
hand side can easily be handled by replacing $u_{i+1}$ with 
$\hu_{i+1}.$ On account of 
$\mu_i \leq \frac{2}{\norm{T}^2}$ and by definition 
$\hu_{i+1}$ and $u_{i}$ are from the constraint
domain $\mathrm{BV}(\Omega)$, we then obtain,
\bea
- 2\mu_{i} \langle \hu_{i + 1} - u^{\dagger} , 
T^{T}(T u_{i} - v^{\delta})\rangle & = & -2\mu_{i}
\langle \hu_{i + 1} - u_{i} , T^{T}(T u_{i} -v^{\delta}) \rangle
-2\mu_{i}\langle u_{i} - u^{\dagger} , 
T^{T}(T u_{i} -v^{\delta}) \rangle 
\nonumber\\
& = &-2\mu_{i}
\langle \hu_{i + 1} - u_{i} , T^{T}(T u_{i} -v^{\delta}) \rangle
-2\mu_{i}\langle u_{i} - u^{\dagger} , 
T^{T}(Tu_{i} - Tu^{\dagger}) \rangle 
\nonumber\\
& - & 2\mu_{i} \langle u_{i} - u^{\dagger} , 
T^{T}(v^{\dagger} - v^{\delta}) \rangle
\nonumber\\
& \leq & -2\mu_{i}
\langle \hu_{i + 1} - u_{i} , T^{T}(T u_{i} -v^{\delta}) \rangle
-2\mu_{i}\norm{Tu_{i} - Tu^{\dagger}}^2
+ 2\mu_{i}\delta\norm{T(u_{i} - u^{\dagger})}
\nonumber\\
& \leq & 
-2\mu_{i}\langle \hu_{i + 1} - u_{i} , T^{T}(T u_{i} -v^{\delta}) \rangle
- 2\mu_{i}\delta^{2}(\tau - 1)^2 
+2\mu_{i}\delta\norm{T (u_{i} - u^{\dagger})},
\nonumber\\
& = &-2\mu_{i}\langle \hu_{i + 1} - u_{i} , 
T^{T}(T u_{i} - T u^{\dagger}) \rangle 
-2\mu_{i}\langle \hu_{i + 1} - u_{i} , 
T^{T}(v^{\dagger} - v^{\delta}) \rangle
\nonumber\\
& - & 2\mu_{i}\delta^{2}(\tau - 1)^2 
+2\mu_{i}\delta\norm{T} \norm{u_{i} - u^{\dagger}}
\nonumber\\
& \leq & 2\mu_{i}\norm{\hu_{i+1} - u_{i}}\norm{T}^2
\norm{u_{i} - u^{\dagger}} + 
2\mu_{i}\norm{\hu_{i+1} - u_{i}}\norm{T}
\norm{v^{\dagger} - v^{\delta}}
\nonumber\\
& - & 2\mu_{i}\delta^{2}(\tau - 1)^2 
+2\mu_{i}\delta\norm{T} \norm{u_{i} - u^{\dagger}}
\nonumber\\
& \leq &4\Psi(\delta)\norm{\hu_{i+1} - u_{i}} + 
\frac{4\delta}{\norm{T}}\norm{\hu_{i+1} - u_{i}}
- 2\mu_{i}\delta^{2}(\tau - 1)^2 
\nonumber\\
& + &2\mu_{i}\delta\norm{T} \norm{u_{i} - u^{\dagger}}.
\nonumber
\eea
Also, likewise, as for the second inner product on the 
right hand side of (\ref{primal_est_AlgII0}),
\bea
-2 \mu_{i}\alpha_{i}\langle \hu_{i+1} - u^{\dagger} , 
D^{T}(w_{i} - w_{0}) \rangle \leq 
\sqrt{\mu_{i}\alpha_{i}}\norm{\hu_{i+1} - u^{\dagger}}^2
+ \sqrt{\mu_{i}\alpha_{i}}\norm{D^{T}w_{i}}^2.
\nonumber
\eea
Plugging these both estimations into (\ref{primal_est_AlgII0}), 
summing up with (\ref{cvx_primal_truthest}) and 
since the variables $u_{i}$ and $\hu_{i+1}$ are of 
$\mathrm{BV(\Omega)},$ the following estimation yields the result,
\bea
\norm{u_{i+1} - u^{\dagger}}^2 & \leq & (2 - \lambda)
\norm{u_{i} - u^{\dagger}}^2 + (\lambda - 1)
\norm{\hu_{i+1} - u^{\dagger}}^2
+ \left(\lambda(\lambda - 1) - 1\right)\norm{u_{i} - \hu_{i+1}}^2
\nonumber\\
& + & 4\Psi(\delta)\norm{\hu_{i+1} - u_{i}} 
+ 2\delta^2 -\frac{4}{\norm{T}^2}\delta^2(\tau - 1)^2 + 
\delta\frac{4}{\norm{T}}\Psi(\delta)
\nonumber\\
& + &\frac{\sqrt{2}}{\norm{T}}\frac{1}{\sqrt{i}}\Psi(\delta)^2
+ \frac{\sqrt{2}}{\norm{T}}\frac{1}{\sqrt{i}} \norm{D^{T}w_{i}}^2.
\nonumber
\eea
\end{proof}
Now, we are also concerned about the exact choice of the relaxation parameter
$\lambda \in (1,2)$ in Algorithm \ref{algorithmNPA-II}.
As in the proof of Theorem \ref{trm_AlgI_cumulative}, 
Ces\'{a}ro summation of some terms are guaranteed depending 
on the choice of $\lambda$ that is formulated below.
\begin{theorem}
Let the initial guess $u_{0}$ be of $\mathrm{BV}(\Omega)$ and
the dynamical regulariazation parameter that satisfies MDP be
$\alpha_{i}(\delta , v^{\delta}) = \frac{1}{i(\delta , v^{\delta})}.$
If the relaxation parameter $\lambda = \frac{2}{i^{\ast}}$ and step-length
$\mu = \frac{1}{2 i^{\ast}\norm{T}^2},$ then
$\norm{u_{i^{\ast}} - u^{\dagger}} \rightarrow 0$ as 
$i^{\ast} \rightarrow \infty$ whilst $\delta \rightarrow 0.$
\end{theorem}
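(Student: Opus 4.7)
The plan is to mirror the strategy used in Theorem \ref{trm_AlgI_cumulative}, but start from the per-step bound established in Theorem \ref{trm_AlgII_consecutive} and specialize the free parameters $\mu,\lambda$ to obtain Ces\`aro-summable residuals. The first step is to revisit the combination of the convex-extrapolation identity (\ref{cvx_primal_truthest}) with the proximal bound (\ref{primal_est_AlgII0}) for $\norm{\hu_{i+1}-u^\dagger}^2$. Substituting the latter into the former and collecting the coefficients $(1-\lambda)+\lambda=1$ in front of $\norm{u_i-u^\dagger}^2$ yields a clean recursion of the form
\begin{equation*}
\norm{u_{i+1}-u^\dagger}^2 \leq \norm{u_i-u^\dagger}^2 + \lambda(\lambda-2)\norm{u_i-\hu_{i+1}}^2 - \lambda\norm{\hu_{i+1}-u_i}^2 + \lambda \,\mathcal{N}_i,
\end{equation*}
where $\mathcal{N}_i$ gathers exactly the noise-level inner products (involving $T^{T}(Tu_i-v^\delta)$ and $D^{T}(w_i-w_0)$) that already appear in the proof of Theorem \ref{trm_AlgII_consecutive}. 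Since $\lambda(\lambda-2)\leq 0$ for $\lambda\in(0,2)$, the first residual term can be discarded, leaving a telescoping inequality.

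Next I would iterate this bound from $i=0$ to $i=i^{\ast}-1$ and sum, obtaining
\begin{equation*}
\norm{u_{i^\ast}-u^\dagger}^2 \;\leq\; \norm{u_0-u^\dagger}^2 \;+\; \lambda\sum_{i=0}^{i^\ast-1}\mathcal{N}_i .
\end{equation*}
Each inner product in $\mathcal{N}_i$ is then bounded, exactly as in the proof of Theorem \ref{trm_AlgI_cumulative}: by Cauchy--Schwarz, the deterministic noise model $\norm{v^\dagger-v^\delta}\leq\delta$, the MDP consequence (\ref{consequence_MDP2}), and the VSC (Assumption \ref{assump_conventional_variational_ineq}), which is applicable because both $u_i$ and $\hu_{i+1}$ lie in $\mathrm{BV}(\Omega)$ by construction of the proximal steps. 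This produces contributions proportional to $\mu\Psi(\delta)\norm{u_{i+1}-u^\dagger}$, $\mu\delta\norm{u_{i+1}-u^\dagger}$, $\sqrt{\mu\alpha_i}\norm{D^{T}w_i}^2$, and the MDP discrepancy term $\mu\tau\delta\Psi(\delta)\norm{Tu_{i^\ast-1}-v^\delta}$.

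Inserting $\mu=\tfrac{1}{2i^\ast\norm{T}^2}$ and $\lambda=\tfrac{2}{i^\ast}$ then rescales each of these pieces by a factor of order $1/i^\ast$, turning the corresponding sums into Ces\`aro averages $\tfrac{1}{i^\ast}\sum_{i=0}^{i^\ast-1}\norm{u_{i+1}-u^\dagger}$ and $\tfrac{1}{i^\ast}\sum_{i=0}^{i^\ast-1}\norm{D^{T}w_i}^2$. The boundedness of $\norm{u_{i+1}-u^\dagger}$ is inherited from Proposition \ref{propn_better_approxII}, which certifies that the iterates remain closer to $u_{\alpha}^{\delta}$ than the initial guess, combined with the triangle inequality against the variational-regularization bound on $\norm{u_\alpha^\delta-u^\dagger}$. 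Since Ces\`aro averages of bounded sequences remain bounded, the joint limit $\delta\to 0$ (so that $\Psi(\delta)\to 0$) and $i^\ast\to\infty$ forces the right-hand side to 0.

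The main obstacle I anticipate is reconciling the prescription $\lambda=2/i^\ast$ with the hypothesis $\lambda\in(1,2)$ stated for Algorithm \ref{algorithmNPA-II} and used in Theorem \ref{trm_AlgII_consecutive}; for $i^\ast\geq 3$ one has $\lambda<1$, and the inequality must be re-derived in that regime where the coefficients $(\lambda-1)$ and $(\lambda(\lambda-1)-1)$ become non-positive and hence may be dropped on the right-hand side of Theorem \ref{trm_AlgII_consecutive} rather than kept. Once that sign bookkeeping is done, the step-length choice $\mu=1/(2i^\ast\norm{T}^2)$ plays the same role as in Theorem \ref{trm_AlgI_cumulative}, and the extrapolation weight $\lambda=2/i^\ast$ plays the complementary role of Ces\`aro-weighting the extrapolation-induced residual $\norm{\hu_{i+1}-u_i}$, which is the genuinely new contribution compared with the analysis of Algorithm \ref{algorithmNPA-I}.
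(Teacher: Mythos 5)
Your route is essentially the paper's: both arguments combine the convex--extrapolation identity (\ref{cvx_primal_truthest}) with the proximal estimate (\ref{primal_est_AlgII0}) over $i=0,\dots,i^{\ast}-1$, bound the noise inner products via Cauchy--Schwarz, the deterministic noise model, the MDP consequence and the VSC, and then invoke Ces\`aro summability under the choices $\mu=\frac{1}{2i^{\ast}\norm{T}^2}$ and $\lambda=\frac{2}{i^{\ast}}$. Your ordering --- substitute (\ref{primal_est_AlgII0}) into (\ref{cvx_primal_truthest}) per step, note that the coefficients of $\norm{u_{i}-u^{\dagger}}^2$ sum to one, and only then telescope --- is actually cleaner than the paper's, which sums the two inequalities separately and is left carrying the terms $(\lambda-1)\norm{\hu_{i^{\ast}}-u^{\dagger}}^2$ and $\lambda\sum_{i}\norm{\hu_{i+1}-u^{\dagger}}^2$ on the right-hand side of its final estimate. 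A minor bookkeeping slip: after substitution the residual coefficient is $\lambda(\lambda-2)\norm{u_{i}-\hu_{i+1}}^2$ alone; writing an additional $-\lambda\norm{\hu_{i+1}-u_{i}}^2$ double-counts the proximal estimate's contribution. Since both quantities are nonpositive for $\lambda\in(0,2)$ this does not damage the argument. You are also right to flag the tension between $\lambda=2/i^{\ast}$ and the standing hypothesis $\lambda\in(1,2)$; the paper passes over this silently, and your observation that the relevant coefficients remain nonpositive for $\lambda\in(0,1)$ is the correct repair.

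There is, however, a genuine gap in the final step --- one you inherit from the paper rather than introduce. After telescoping, the right-hand side still contains $\norm{u_{0}-u^{\dagger}}^2$ with a coefficient bounded away from zero (coefficient $1$ in your version, $2-\lambda\to 2$ in the paper's). This term depends on neither $\delta$ nor $i^{\ast}$, so the joint limit $\delta\to 0$, $i^{\ast}\to\infty$ cannot force the right-hand side to zero unless $u_{0}=u^{\dagger}$. Your closing claim that the vanishing of $\Psi(\delta)$ together with boundedness of the Ces\`aro averages drives the bound to zero therefore does not follow from the inequality you derive; the paper's own conclusion (``all the terms \dots are either bounded or negative'') suffers from exactly the same defect. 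To genuinely conclude convergence one would need a strict contraction factor multiplying $\norm{u_{i}-u^{\dagger}}^2$ in the per-step recursion, or an argument that the discarded nonpositive terms absorb the initial error; neither is supplied in your proposal or in the paper.
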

\begin{proof}
Let us iterate the equality (\ref{cvx_primal_truthest})  
from $i = 0$ to $i = i^{\ast} - 1$ and sum up over $i,$
\bea
\norm{u_{i^{\ast}} - u^{\dagger}}^2 = 
(1 - \lambda)\norm{u_{0} - u^{\dagger}}^2 
& +& (1 - \lambda)\sum_{i = 1}^{i = i^{\ast} - 1}\norm{u_{i} - u^{\dagger}}^2
 + \lambda\norm{\hu_{i^{\ast}} - u^{\dagger}}^2
\nonumber\\
& + &\lambda\sum_{i = 0}^{i = i^{\ast} - 2}\norm{\hu_{i+1} - u^{\dagger}}^2
 - \lambda(1 - \lambda)\sum_{i = 0}^{i = i^{\ast} - 1}\norm{u_{i} - \hu_{i+1}}^2
\nonumber\\
\label{AlgII_cumulative1}
\eea
Note that we have pulled the term2 $\norm{u_{0} - u^{\dagger}}^2$ 
and $\lambda\norm{u_{i^{\ast}} - u^{\dagger}}^2$ out of the sequences 
$(1 - \lambda)\sum_{i = 1}^{i = i^{\ast} - 1}\norm{u_{i} - u^{\dagger}}^2$
and $\lambda\sum_{i = 0}^{i = i^{\ast} - 2}\norm{\hu_{i+1} - u^{\dagger}}^2$
respectively,
so that we can make use of these individuals in the next step.
Then, we repeat the same for the estimation (\ref{primal_est_AlgII0}),
\bea
\norm{\hu_{i^{\ast}} - u^{\dagger}}^2 \leq \norm{u_{0} - u^{\dagger}}^2 
- \sum_{i = 0}^{i = i^{\ast} - 1}\norm{\hu_{i+1} - u_{i}}^2 
& - & 2\mu\sum_{i=0}^{i^{\ast} - 1} 
\langle \hu_{i + 1} - u^{\dagger} , T^{T}(T u_{i} - v^{\delta})\rangle
\nonumber\\
& - & 2\mu\sum_{i=1}^{i^{\ast} - 1}\alpha_{i}
\langle \hu_{i + 1} - u^{\dagger} , D^{T}w_{i} \rangle .
\label{AlgII_cumulative2}
\eea
In this last estimation, analagous sequences of
inner products on the right hand side
have already been bounded in the proof of Theorem \ref{trm_AlgI_cumulative}.
In order to avoid dublicating
the calculations, one can replace $u_{i+1}$ by $\hu_{i+1}.$ Then quick
adaptation of those estimations above must reveal 
\bea
- 2\mu\sum_{i=0}^{i^{\ast} - 1} 
\langle \hu_{i + 1} - u^{\dagger} , T^{T}(T u_{i} - v^{\delta})\rangle
& = & -2\mu\sum_{i=0}^{i^{\ast} - 1} \langle \hu_{i+1} - u^{\dagger} , 
T^{T}T(u_{i} - u^{\dagger})\rangle
- 2\mu\sum_{i=0}^{i^{\ast} - 1} 
\langle \hu_{i + 1} - u^{\dagger} , T^{T}(v^{\dagger} - v^{\delta})\rangle
\nonumber\\
& \leq & 2\mu\norm{T}^2\Psi(\delta)^2\sum_{i=0}^{i^{\ast} - 1}1
+ 2\mu\norm{T}\delta\Psi(\delta)\sum_{i=0}^{i^{\ast} - 1}1
\nonumber\\
& \leq &\Psi(\delta)^2(1 - \frac{1}{i^{\ast}}) + 
\delta\Psi(\delta)(1 - \frac{1}{i^{\ast}})
\label{Alg_II_cumulative_PrimalInner}
\eea
Also likewise,
\bea
- 2\mu\sum_{i=1}^{i^{\ast} - 1}\alpha_{i}
\langle \hu_{i + 1} - u^{\dagger} , D^{T}w_{i} \rangle 
\leq \sqrt{\mu}\sum_{i=1}^{i^{\ast} - 1}\sqrt{\alpha_{i}}
\norm{\hu_{i + 1} - u^{\dagger}}^2
+ \sqrt{\mu}\sum_{i=1}^{i^{\ast} - 1}\sqrt{\alpha_{i}}
\norm{D^{T}w_{i}}^2.
\label{Alg_II_cumulative_DualInner}
\eea
If we sum (\ref{AlgII_cumulative1}) and (\ref{AlgII_cumulative2}) 
with the consideration of both (\ref{Alg_II_cumulative_PrimalInner})
and (\ref{Alg_II_cumulative_DualInner}), after the necessary algebraic 
arrangment, we then arrive at,
\bea
\norm{u_{i^{\ast}} - u^{\dagger}}^2 & \leq & (2 - \lambda)
\norm{u_{0} - u^{\dagger}}^2 + 
(\lambda - 1)\norm{\hu_{i^{\ast}} - u^{\dagger}}^2
+ \lambda\sum_{i = 0}^{i = i^{\ast} - 2}\norm{\hu_{i+1} - u^{\dagger}}^2
\nonumber\\
& - &\left(\lambda(1 - \lambda) + 1\right)
\sum_{i=0}^{i = i^{\ast} - 1}\norm{\hu_{i+1} - u_{i}}^2
\nonumber\\
& + &\Psi(\delta)^2(1 - \frac{1}{i^{\ast}}) + 
\delta\Psi(\delta)(1 - \frac{1}{i^{\ast}}) .
+ \sqrt{\mu}\sum_{i=1}^{i^{\ast} - 1}\sqrt{\alpha_{i}}
\norm{\hu_{i + 1} - u^{\dagger}}^2
+ \sqrt{\mu}\sum_{i=1}^{i^{\ast} - 1}\sqrt{\alpha_{i}}
\norm{D^{T}w_{i}}^2.
\nonumber
\eea
With the given choice of the of the relaxation parameter 
$\lambda$ all the terms on the first and the second lines 
are either bounded or negative. Furthermore, the initial guess 
$u_{0}$ and $\hu_{i^{\ast}}$ are from $\mathrm{BV}(\Omega).$
Hence, all these aforementioned assumptions yield the assertion.
\end{proof}


\section{Numerical Results; Behaviours of the Algorithms From Regularization Aspect}

In this section, we will put the algorithms into tests in order to observe
they behave as iterative regularization prcedures. To this end,
the followings in our numerical tests will be analysed in the computerized 
environment;
\begin{enumerate}
\item Iterative error values on the image space,
\item Iterative error values on the pre-image space,
\item Iterative error values of both algorithms against each other,
\item Iterative error values with the different noise amount input 
on the measured data.
\end{enumerate}
In what follows, the data on the pre-image space are painted drawings, 
see Acknowledgement. The measured data is the sinogram tomographic 
measurement applied on the image, \textbf{\cite{Bardsley18}}.

Firstly, we will provide some simple benchmark on the efficiency
of each algorithm against each other. The mathematical developlment
has proved that Algorithm \ref{algorithmNPA-II} produce 

\begin{figure}[!tbp]
\includegraphics[height=3.5in,width=7in,angle=0]
{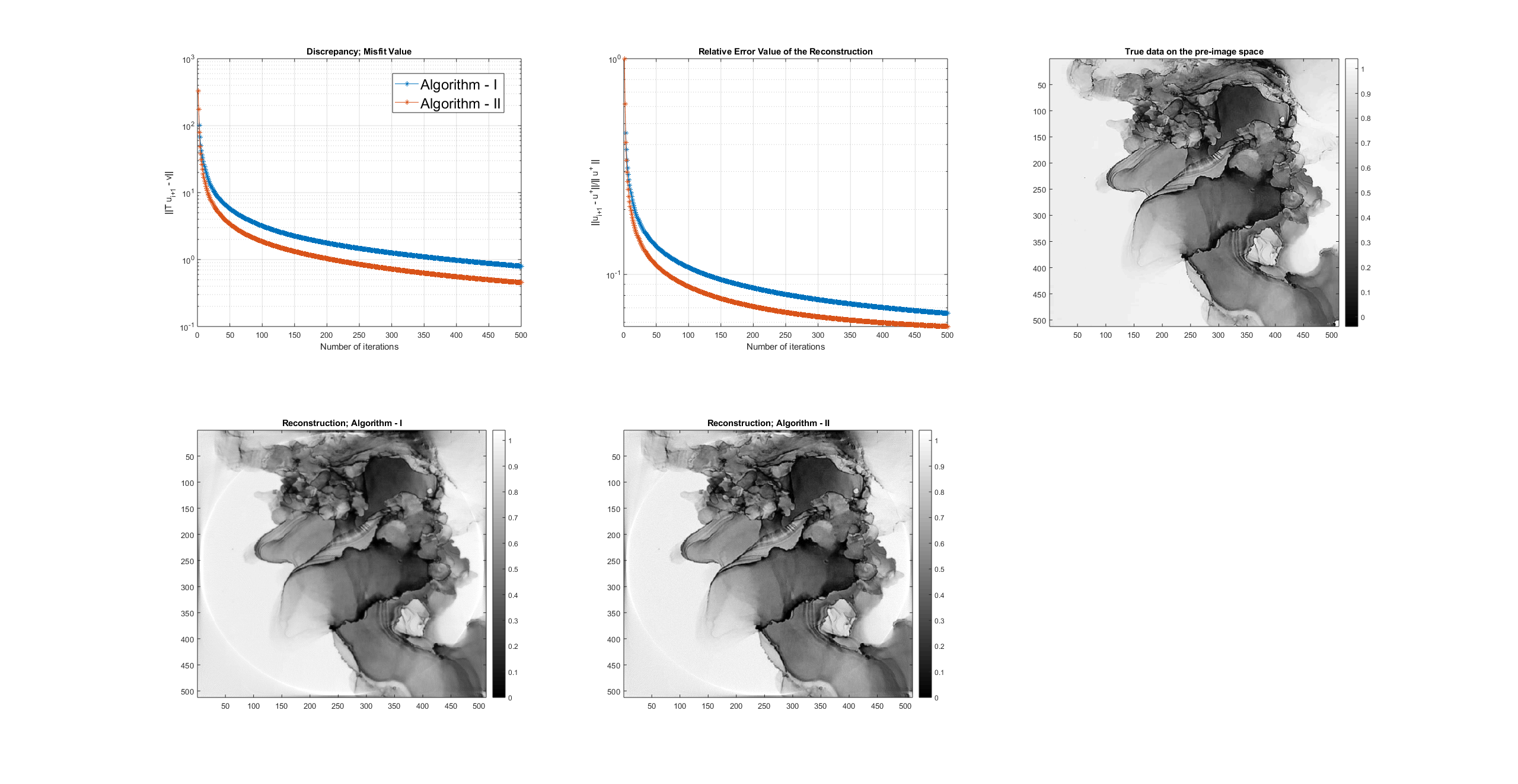}
\caption[Paint reconstruction.]
{\footnotesize Reconstruction of some painting image from its sinogram 
tomography measurement. This benchmark analysis display the efficiency 
of each algorithm. As has been analysed in the mathematical development, 
Algorithm \ref{algorithmNPA-II} provide much less error value.} 
\label{benchmark_AlgIvsII}
\end{figure}
As can be seen in the Figures \ref{benchmark_AlgI_noise} and 
\ref{benchmark_AlgII_noise}, it is verified that the less noise 
amount in the measured data provide less error estimation both 
on the image and the pre-image spaces.
\begin{figure}[!tbp]
\includegraphics[height=4in,width=7in,angle=0]
{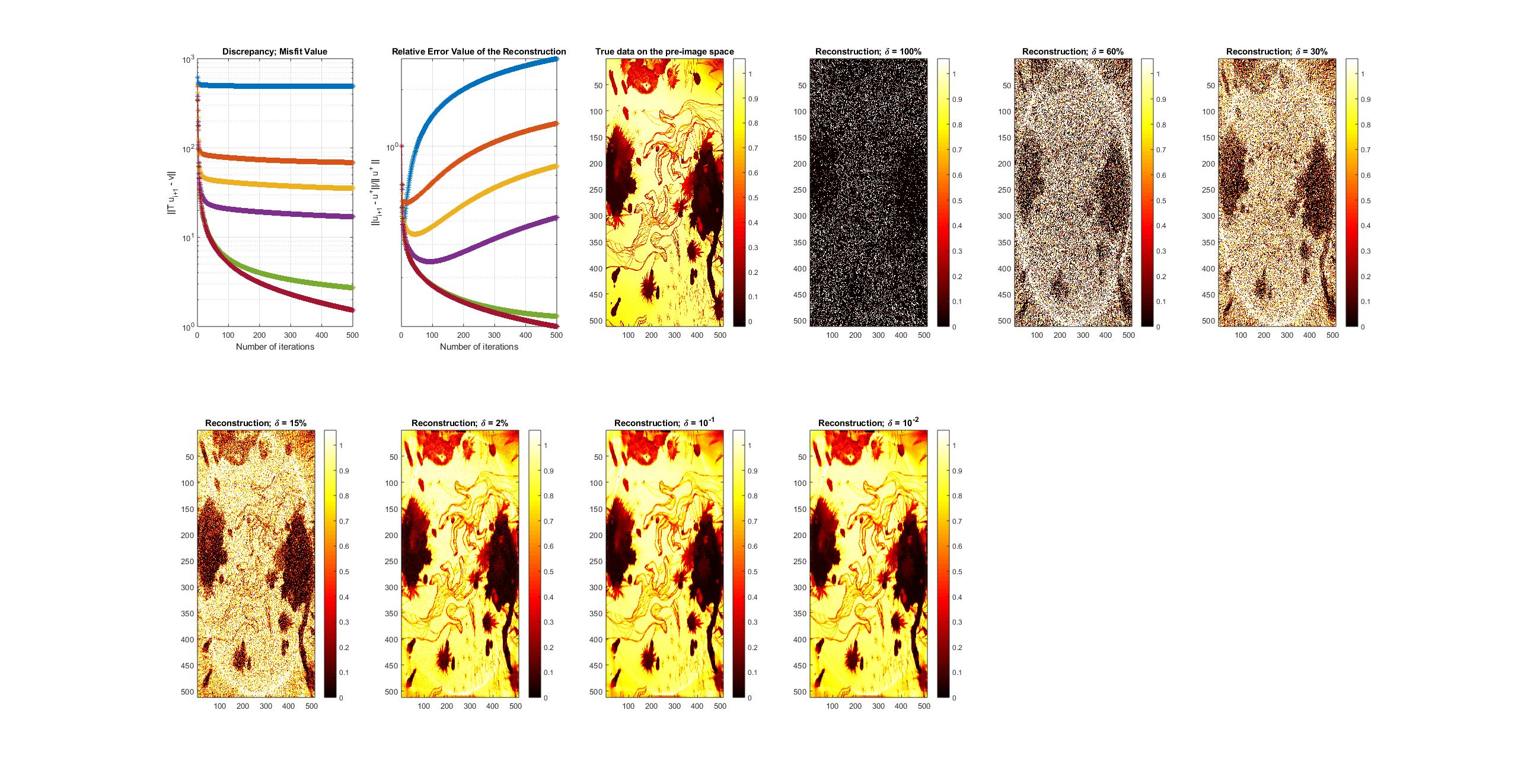}
\caption[Numerical results per different noise amount.]
{\footnotesize Above results display the behaviour of the Algorithm 
\ref{algorithmNPA-I} with the inclusion of different amount of noise
on the measured data. Decay on the error estimations both on the image
and the pre-image spaces has been observed with lesst noise amount.}
\label{benchmark_AlgI_noise}
\end{figure}

\begin{figure}[!tbp]
\includegraphics[height=4in,width=7in,angle=0]
{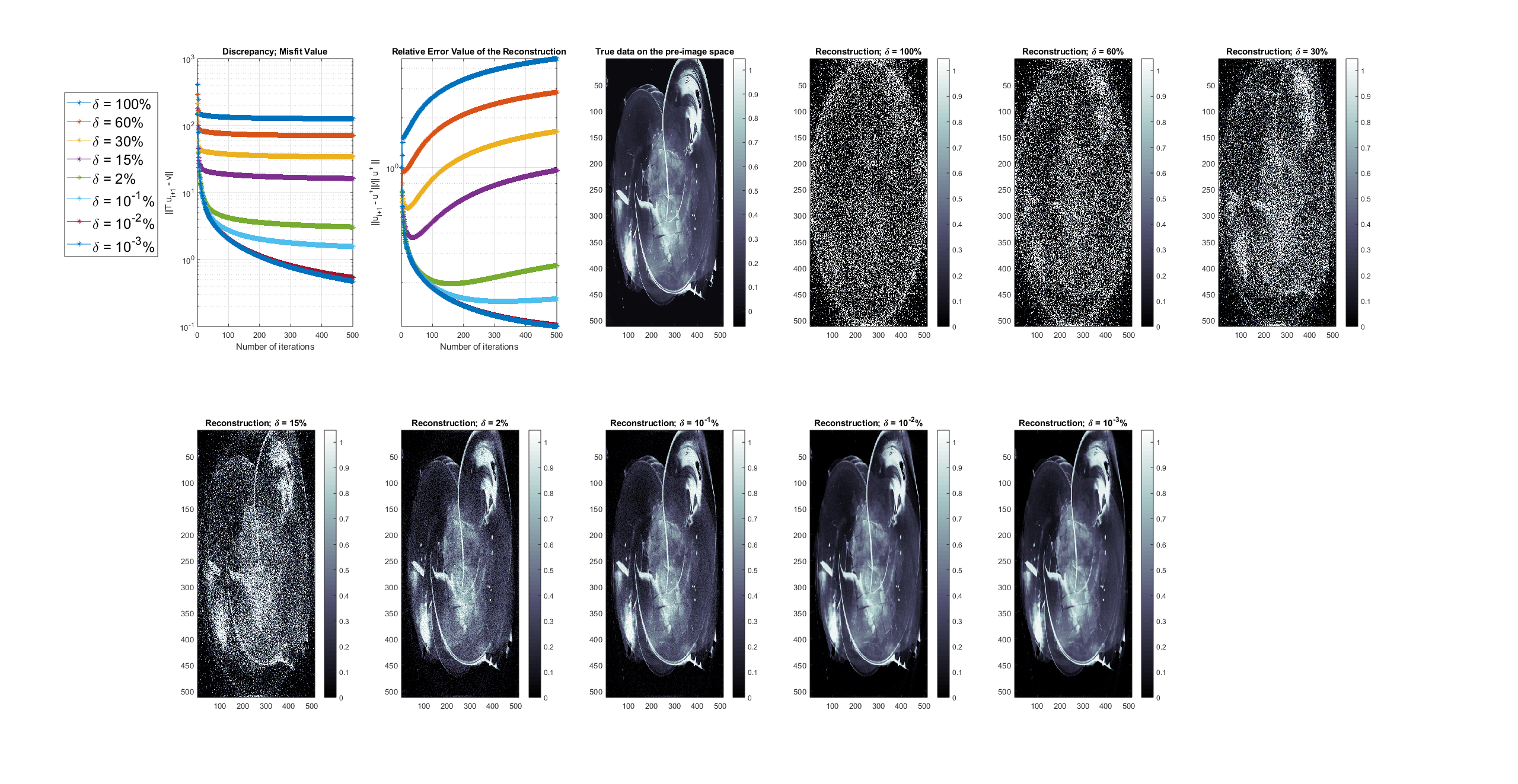}
\caption[Numerical results per different noise amount.]
{\footnotesize Above results display the behaviour of the Algorithm 
\ref{algorithmNPA-II} with the inclusion of different amount of noise
on the measured data. Decay on the error estimations both on the image
and pre-image spaces has been observed with lesst noise amount.}
\label{benchmark_AlgII_noise}
\end{figure}


\section{An Atmospheric Tomography Problem: GPS Tomography}
\label{GPS-tomo}

One important predictor in meteorology is the humidity of the
atmosphere. This is estimated by fan-beam measurements between 
satellite transmitters and land-based receivers. The measurements 
are sparse and fluctuate randomly with receiver availability. 
The task is to reconstruct from these measurements the 3-dimensional, 
spatially varying index of refraction of the atmosphere, 
from which the relative humidity can be inferred.

GPS-tomography involves the reconstruction of some quantity (e.g. humidity), 
pointwise within a volume, from geodesic X-ray measurements transmitted 
by nonuniformly distributed transducers (satellites). These measurements 
are collected by nonuniformly distributed receivers on the ground (ground stations).
As in the conventional tomography, the task here is the reconstruction of the density 
volume profile of a layer in the atmosphere from a set of line integrals
that are of fan-beam projections. 

Function reconstruction from its measured line integrals 
was firstly proposed and solved in \textbf{\cite{Radon17}}. 
Profound mathematical and numerical aspects of the computerized tomography 
have been studied in \textbf{\cite{Natterer01, NattererWuebbeling01}}. 
Measurement from the Radon transform is obtained by integrating
some integrable function over the hyperplanes in $\R^{N}.$ The ray transform,
on the other hand, produces measurement by integrating the function over straight lines.
It is known that in the two dimensional tomography, general Radon and ray transformations 
coincide, \textbf{\cite[p. 17]{NattererWuebbeling01}}.

In the discretized form of the problem, it is assumed that each station receives
equal number of signals transmitted by the satellites. Also for the sake of 
simplicity, we ignore any deviations from the shortest path between transmitters
and receivers due to atmospheric refractivity. The received signal is then
modelled as a line integral along the shortest path between the satellites
and the ground stations.

Peculiar to this problem, reconstructions by Kalman filtering
and ART have been widely applied, 
\textbf{\cite{Bender11, MiidlaRannatUba08, Perler11, ZusBender12}}.
Different from these conventional numerical reconstruction methods, 
a quasi-Newton approach, which is {\em limited memory BFGS} (L-BFGS),
has also been proposed to obtain the optimal regularized solution, 
\textbf{\cite{Altuntac16}}.
The L-BFGS algorithm has been also applied for atmospheric imaging 
wherby the forward problem has been modelled as a phase retrieval problem, see \textbf{\cite{Vogel00}}.

\subsection{Physical Problem}
\label{physical_problem}


This is an inverse problem with incomplete data.
It is well known that the incompleteness of data causes 
nonuniqueness issue in inverse problems,
\textbf{\cite[p. 144]{NattererWuebbeling01}}.
Thorough implementation and inversion of geodesic X-ray
transform has been studied in \textbf{\cite{Monard14}}.
The model of this problem is also widely known. We refer readers
to \textbf{\cite{Altuntac16}} how the compact support
assumption on the targeted data has been established
for satisfying unique solvibility of the problem.

\subsection{Numerical Results; Response of the algorithms to GPS-Tomography}

Above section has already been dedicated to understand that the algorithms
are iterative regularization procedures, furthermore it has been demonstrated
that Algorithm \ref{algorithmNPA-II} provide better results. 
Thus in this section, we will rather present the numerical results 
of Algorithm \ref{algorithmNPA-II} when it is applied to the GPS-Tomography 
problem. Specifically in this problem, recent novel
reserch works \textbf{\cite{Xiong19, Zhao18}} indicate the importance of 
the number and the sources of the measured data. In this work, we are only
able to emphasize that how number of the measurement data, 
{\em e.g.} the signals, affects
the quality of the reconstructed data. In Figure \ref{gps_tomo_FxdParams},
the numerical results have been produced when all the parameters
are chosen as fixed coefficient. Figure \ref{gps_tomo_FxdParams} 
display the results when the parameters are dynamically chosen.
Having a comparison two results against each other, dynamical choice 
of the parameters allow us to obtain better results
with much less iteration steps during the procedure. Regarding the impact
of the number of the measured data on the reconstruction, 
Figures \ref{gps_tomo_MeauseredNo1} and \ref{gps_tomo_MeauseredNo5} 
display less quality on the reconstructed data when the problem
is defined as underdetermined.

In the Figure \ref{gps_tomo_ParamsBenchmark}, we demonstrate the importance of 
the parameter choices. As in the numerical results, with the
dynamically chosen parameters, one can observe less error estimation at the end
of the procedures.

\begin{figure}[!tbp]
\includegraphics[height=3.5in,width=7in,angle=0]
{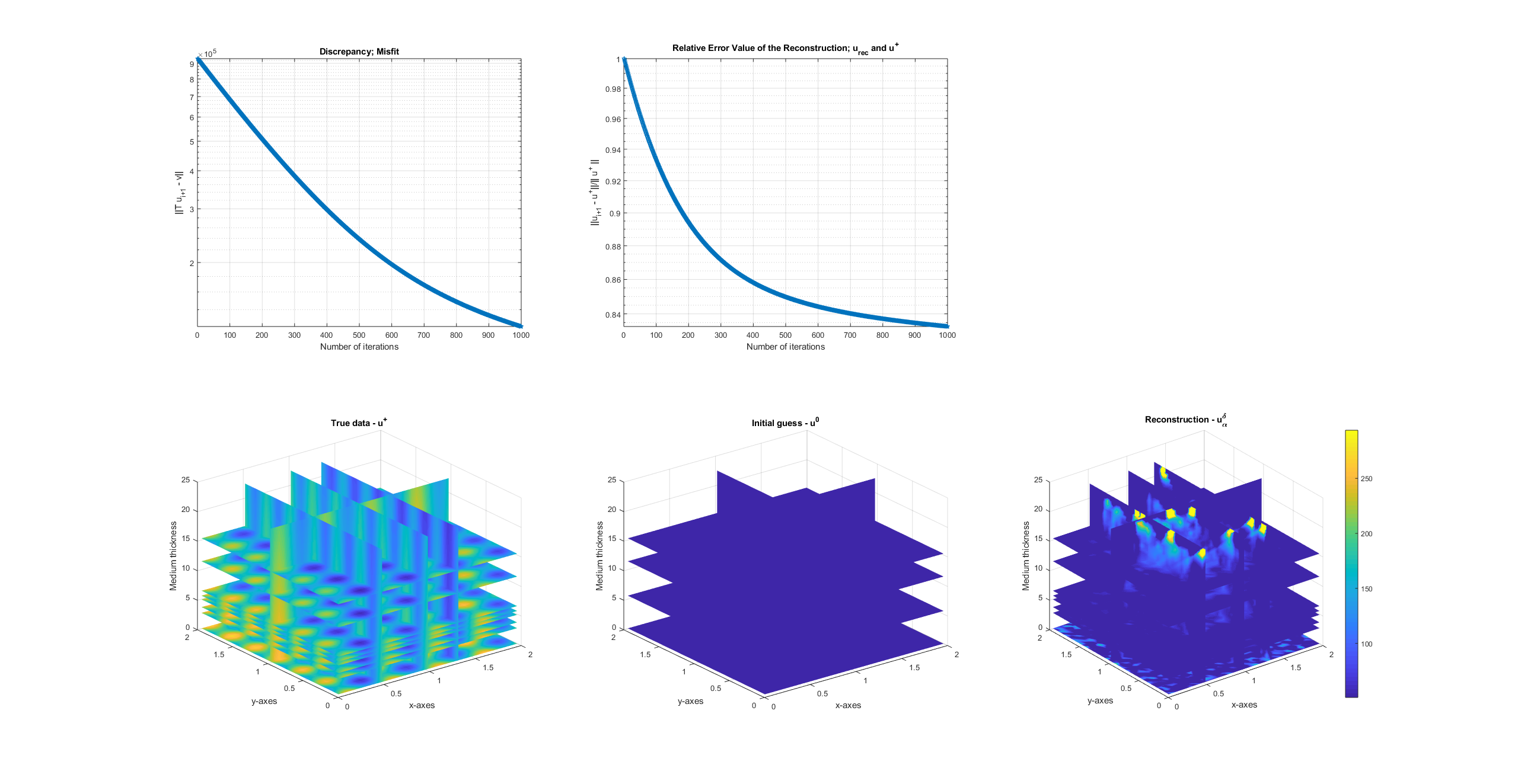}
\caption[Fixed parameters]
{\footnotesize Reconstruction of the volume data when all the parameters 
are chosen as fixed coefficients.}
\label{gps_tomo_FxdParams}
\end{figure}

\begin{figure}[!tbp]
\includegraphics[height=4.5in,width=7in,angle=0]
{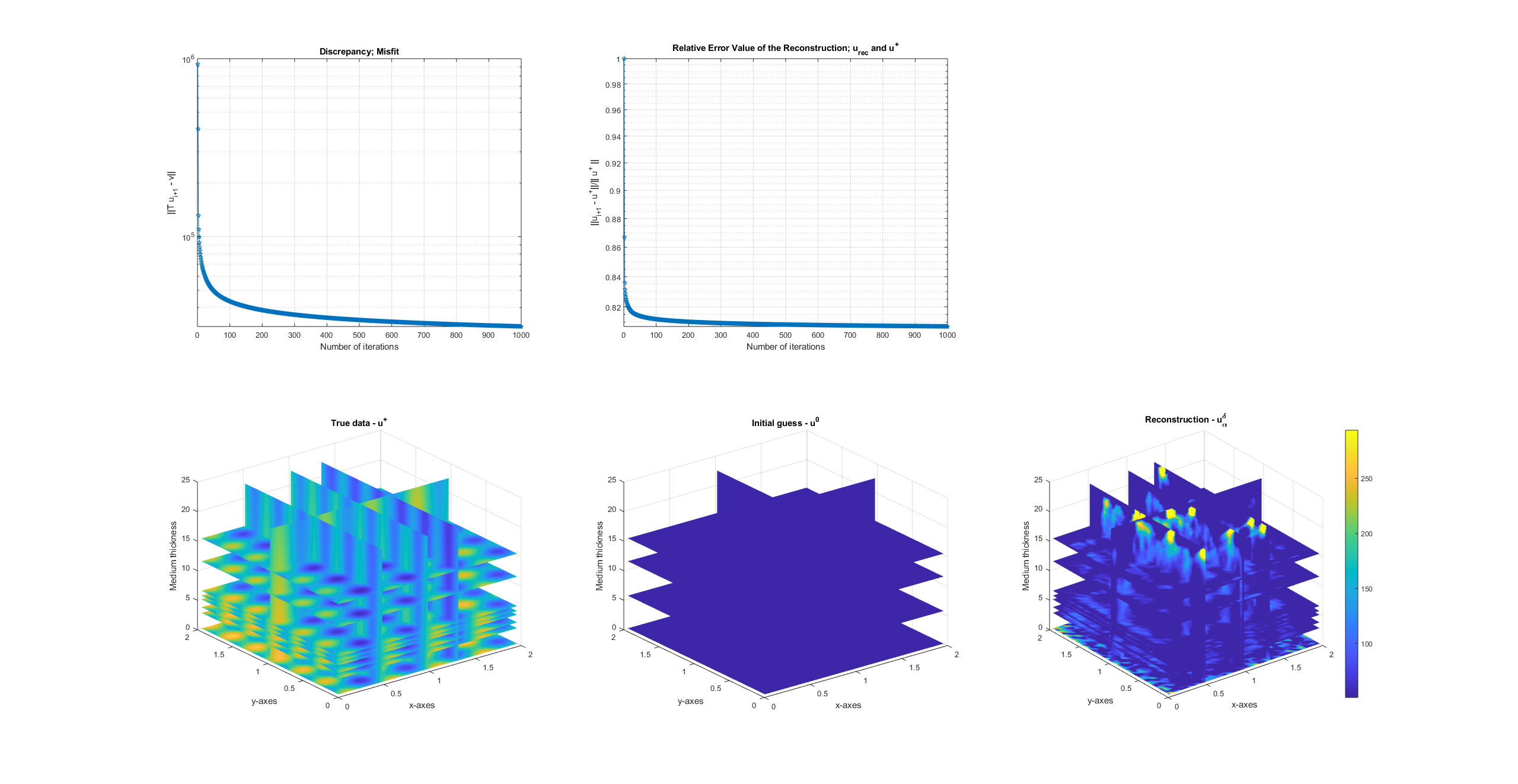}
\caption[Dynamical parameters]
{\footnotesize Reconstruction of the volume data when all the parameters 
are chosen dynamically.}
\label{gps_tomo_DynmParams}
\end{figure}

\begin{figure}[!tbp]
\includegraphics[height=4.5in,width=7in,angle=0]
{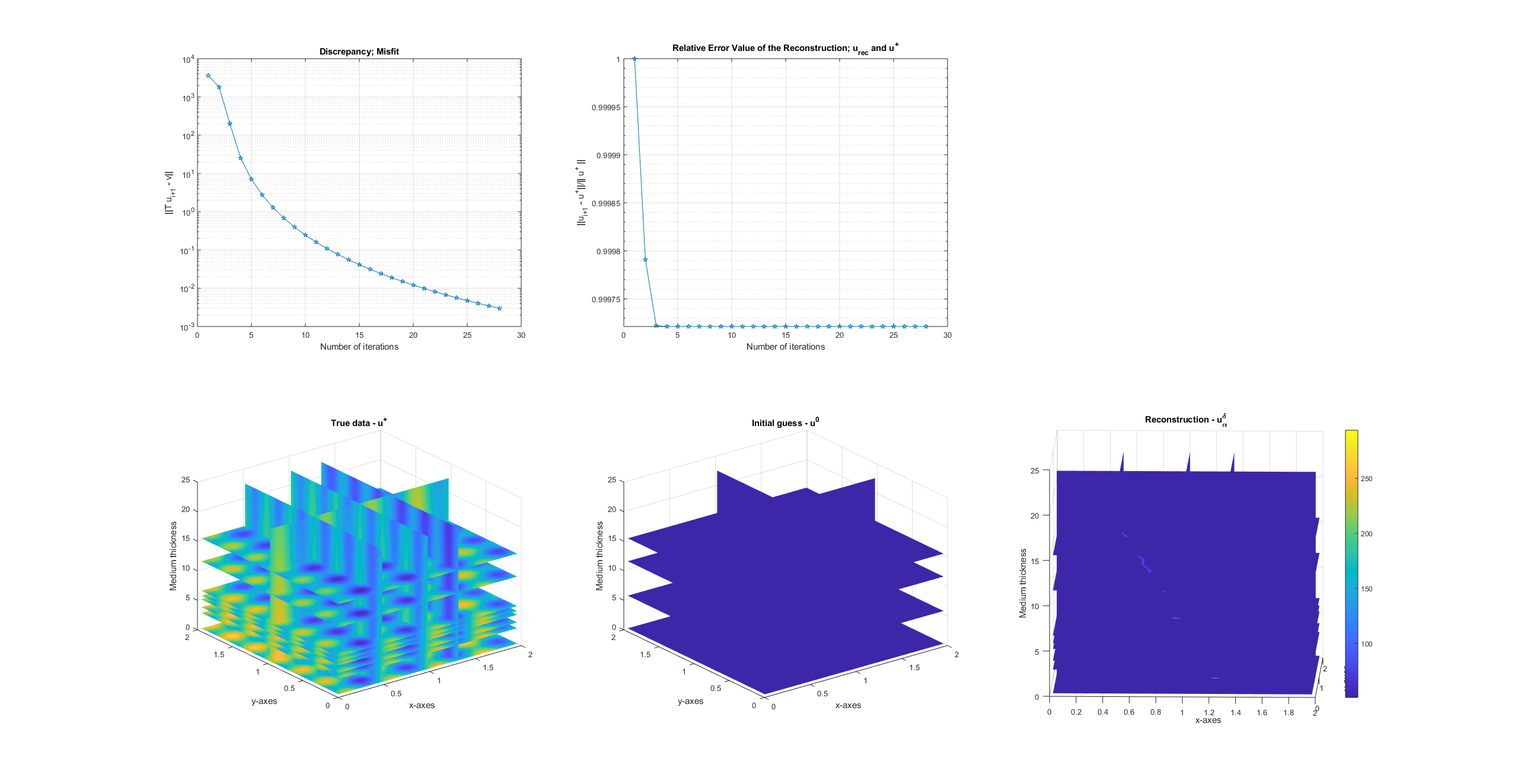}
\caption[Single ray reconstruction]
{\footnotesize Reconstruction of the volume data from single ray. In the 
reconstructed data, the trace of the signal can be seen.}
\label{gps_tomo_MeauseredNo1}
\end{figure}

\begin{figure}[!tbp]
\includegraphics[height=4.5in,width=7in,angle=0]
{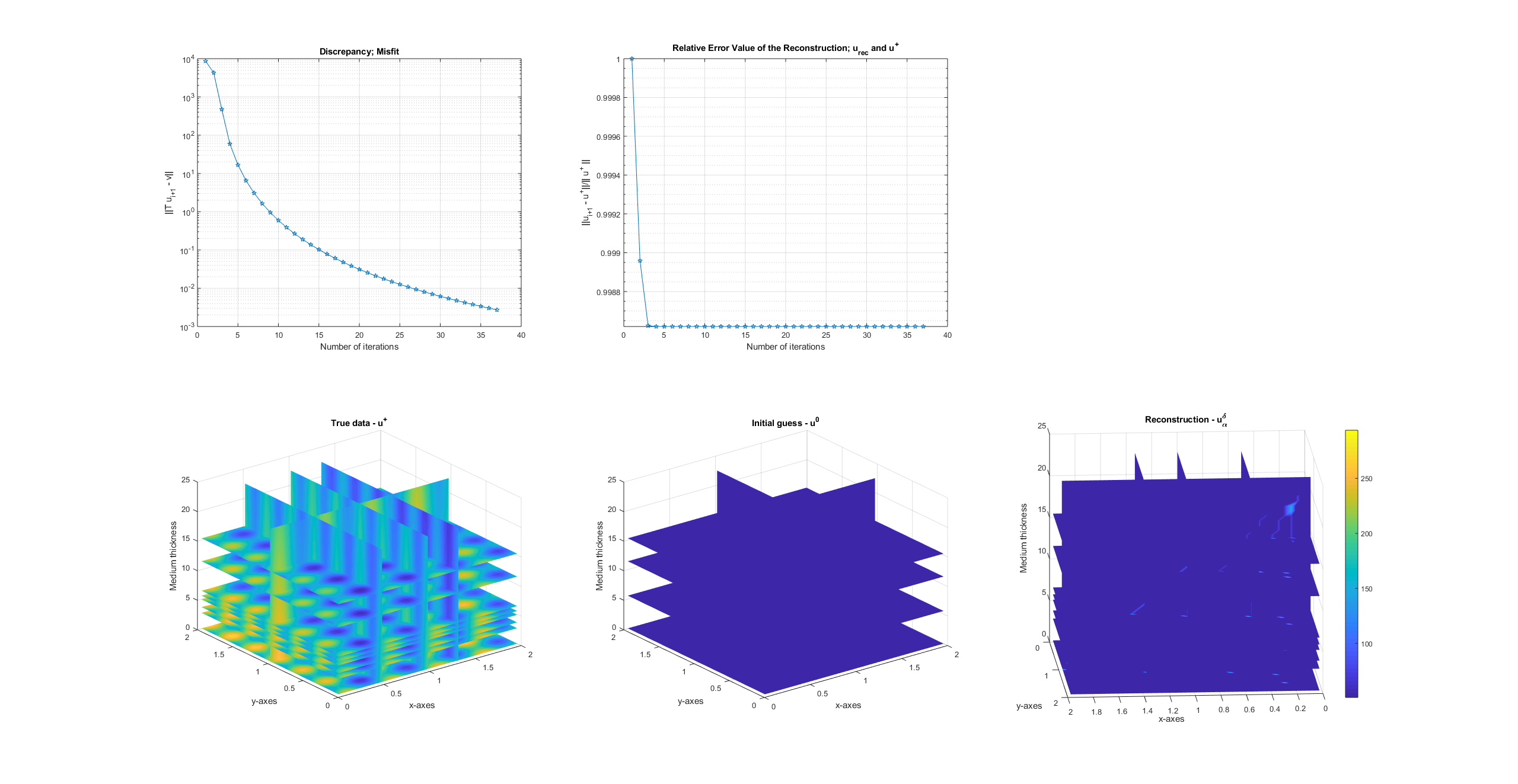}
\caption[Five rays reconstruction]
{\footnotesize Reconstruction of the volume data from five rays.}
\label{gps_tomo_MeauseredNo5}
\end{figure}

\begin{figure}[!tbp]
\includegraphics[height=4.5in,width=7in,angle=0]
{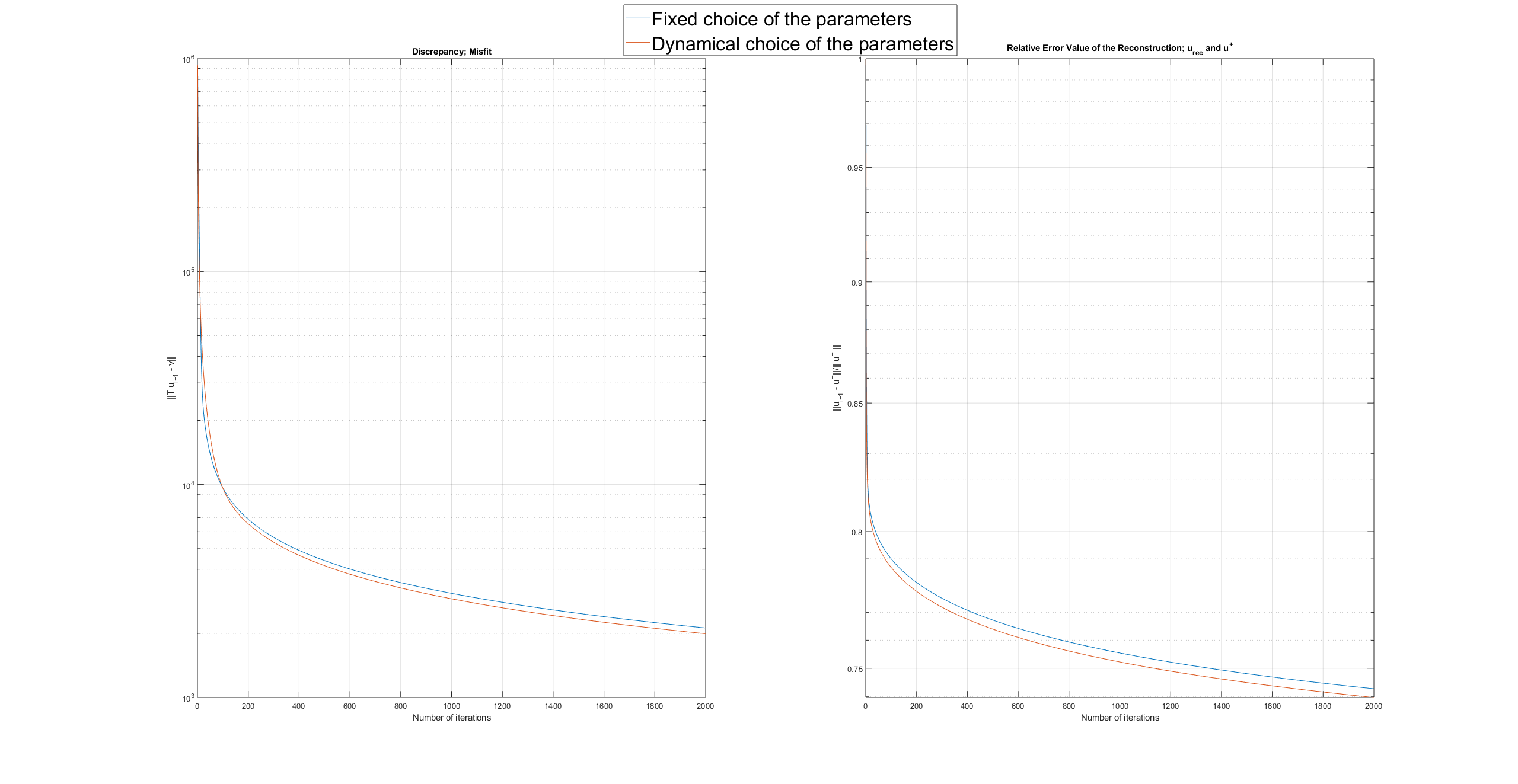}
\caption[Five rays reconstruction]
{\footnotesize Benchmark on dynamical and static parameter choices. Numerical 
results are from Algorithm \ref{algorithmNPA-II} applied to the GPS-Tomography 
problem.}
\label{gps_tomo_ParamsBenchmark}
\end{figure}

\section{Discussion and Future Prospects}
Having the initial guess as constant in the mathematical analysis may not 
reveal the purpose of having Bregman distance as penalizer in our
objective functional (\ref{obj_functional2}). Therefore, 
it could be worthwhile to consider the following iteration scheme
\bea
\nonumber
u_{i+1} \in \argmin_{u \in \cX} \frac{1}{2}\norm{T u - v^{\delta}}^2 +
\alpha_{k} D_{J}(u,u_{k}) + h(u).
\eea
Then, as a result of some quick calculations, the subdifferential
characterization of the iterative minimizer is given by
\bea
\nonumber
\left\{ \begin{array}{rcl}
u_{i+1} &=& \mathrm{prox}_{\mu_{i} h}
\left[u_{i+1}  
- \mu_{i} \left( T^{T}(T u_{i+1} - v^{\delta})
+ \alpha_{i}D^{T}(w_{i+1} - w_{i} ) \right) 
\right]
\\
w_{i+1} &=& \mathrm{prox}_{\nu_{i} g^{\ast}} 
\left( w_{i+1} + \nu_{i} D u_{i+1} \right).
\end{array}\right.
\eea
An algorithm solving this system will definitely convey
minimization impact when the iterative form of Bregman distance
is introduced as penalty term.

\section*{Acknowledgement}
The author is indepted to Ignace Loris for the valuable discussions 
throughout the work. Also, the artist Beng\"{u} U\u{g}urlu 
\"{O}\u{g}retmen is deeply recognized and appreciated for 
sharing the contemporary artwork.

\newpage
\bigskip
\section*{References}


 \bibliographystyle{alpha}

\begin{thebibliography}{99}
  
  \bibitem{AcarVogel94} R. Acar, C. R. Vogel. {\em Analysis of bounded variation penalty methods
   for ill-posed problems,} Inverse Problems, Vol. 10, No. 6, 1217 - 1229, 1994.
   
   \bibitem{Altuntac16} E. Altuntac. {\em Variational Regularization Strategy 
   for Atmospheric Tomography.} Institute for Numerical and Applied Mathematics,
    University of Goettingen, July 2016.
    
    \bibitem{Altuntac18} E. Altuntac. 
    {Choice of the Parameters in A Primal-Dual Algorithm for 
    Bregman Iterated Variational Regularization}. 
    https://arxiv.org/abs/1807.05793, 2018.
    
    \bibitem{Ambrosio00} L. Ambrosio, N. Fusco, and D. Pallara. 
   {\em Functions of bounded variation and free discontinuity problems.} 
   Oxford Mathematical Monographs. The Clarendon Press, Oxford University Press, New York, 2000. 
   
   \bibitem{AnzengruberHofmannMathe14} S. Anzengruber, B. Hofmann and 
   P. Math\'{e}.
   {\em Regularization properties of the sequential discrepancy principle for 
   Tikhonov regularization in Banach spaces.} Appl. Anal. 93, 7, 1382 -1400, 2014.
   
   \bibitem{AnzengruberRamlau10} S. Anzengruber and R. Ramlau.
   {\em Morozov's discrepancy principle for Tikhonov-type functionals with nonlinear operators.}
   Inverse Problems, 26, 025001 (17pp), 2010.   
   
   \bibitem{AnzengruberRamlau11} S. Anzengruber and R. Ramlau.
   {\em Convergence rates for Morozov's discrepancy principle using variational inequalities.}
   Inverse Problems, 27, 105007 (18pp), 2011.

   \bibitem{BachmayrBurger09} M. Bachmayr and M. Burger. {\em Iterative total variation
  schemes for nonlinear inverse problems,} Inverse Problems, 25, 105004 (26pp), 2009.
  
  \bibitem{Bardsley18} J. Bardsley. {\em MATLAB codes for ``Computational Uncertainty Quantification for Inverse 
  Problems,`` 2018.} 
  \url{https://github.com/bardsleyj/SIAMBookCodes}.
  
  \bibitem{BauschkeCombettes11} H. H. Bauschke, P. L. Combettes. 
  {\em Convex analysis and monotone operator theory in Hilbert spaces,} Springer New York, 2011.

  \bibitem{BardsleyLuttman09} J. M. Bardsley and A. Luttman. {\em Total variation-penalized
  Poisson liklehood estimation for ill-posed problems,} Adv. Comput. Math., 31:25-59, 2009.
  
  \bibitem{BeckTeboulle09} A.Beck and M.Teboulle. {\em A Fast Iterative 
  Shrinkage-Thresholding Algorithm for Linear Inverse Problems}. SIAM
  J. Imaging Sciences, Vol. 2, No. 1, pp. 183-202, 2009.
  
  \bibitem{Bender11} M. Bender, G. Dick, M. Ge, Z. Deng, J. Wickert, H. G. Kahle, A. Raabe, G. Tetzlaff. 
  {\em Development of a GNSS water vapour tomography system using algebraic reconstruction technique}, 
  ADV SPACE RES, 47(10):1704-1720, 2011.
  
  \bibitem{BenningBurger17} M. Benning and M. Burger. {\em Modern Regularization 
  Methods for Inverse Problems.} https://arxiv.org/pdf/1801.09922, 2017.
  
  \bibitem{BenningSchoenlib17} 
  M. Benning, M. M. Betcke, M. J. Ehrhardt, C.-B. Sch\"{o}nlieb. 
  {\em Choose your path wisely: gradient descent in a Bregman 
  distance framework}, arXiv:1712.04045, 2017.
  
  \bibitem{Bergounioux11} M. Bergounioux. 
  {\em On Poincar\'{e}-Wirtinger inequalities in space of functions 
  bounded variation.} Control Cybernet., 40, 4, 921-29, 2011.
  
  \bibitem{BoneskyKazimierskiMaass08} T. Bonesky, K. S. Kazimierski, P. Maass, 
  F. Sch\"{o}pfer, T. Schuster. {\em Minimization of Tikhonov Functionals in Banach Spaces}, Abstr. Appl. Anal.,
  Art. ID 192679, 19 pp, 2008.
  
  \bibitem{BonettiniLoris17} S. Bonettini, I. Loris, 
  F. Porta, M. Prato and S. Rebegoldi. {\em On the 
  convergence of a linesearch based proimal-gradient
  method for nonconvex optimization.} Inverse Problems, 
  33, 055005 (30pp), 2017. 
  
  \bibitem{BorweinLuke11} J. Borwein and R. Luke. {\em Entropic Regularization of the 
  $\ell_0$ Function}. In: Bauschke H., Burachik R., Combettes P., Elser V., 
  Luke D., Wolkowicz H. (eds) Fixed-Point Algorithms for Inverse Problems in Science 
  and Engineering. Springer Optimization and Its Applications, vol 49. Springer, 
  New York, NY, 2011.
  
  \bibitem{Bredies09} K. Bredies. {\em A forward-backward splitting algorithm for the minimization 
  of non-smooth convex functionals in Banach space},  Inverse Problems 25, no. 1, 015005, 20 pp, 2009.

  \bibitem{BurgerOsher04} M. Burger, S. Osher. {\em Convergence rates of convex variational regularization,}
  Inverse Problems, 20(5), 1411 - 1421, 2004.
  
  \bibitem{Chambolle04} A. Chambolle. {\em An algorithm for Total Variation
  Minimization and Applications.} J. Math. Imaging Vis., 20, 89 - 97, 2004.
  
  \bibitem{ChambolleCaselles09} A. Chambolle, Vicent Caselles, 
  Matteo Novaga, Daniel Cremers, Thomas Pock. {\em An introduction
  to Total Variation for Image Analysis.} $<$hal-00437581$>$, 2009.

  \bibitem{ChambolleLions97} A. Chambolle, P. L. Lions. {\em Image recovery via total variation minimization
   and related problems,} Numer. Math. 76, 167 - 188, 1997.
  
  \bibitem{ChanChen06} T. F. Chan and K. Chen. {\em An optimization-based multilevel algorithm 
  for total variation image denoising,} Multiscale Model. Simul. 5, no. 2, 615-645, 2006.

  \bibitem{ChanGolubMulet99} T. Chan, G. Golub and P. Mulet. 
  {\em A nonlinear primal-dual method
  for total variation-baes image restoration,} 
  SIAM J. Sci. Comp 20: 1964-1977, 1999.
  
  \bibitem{ChenLoris18} J. Chen and I. Loris {\em On starting and stopping 
  criteria for nested primal-dual iterations}. arXiv:1806.07677, 2018.
  
  \bibitem{Clarke90} F. H. Clarke. 
  {\em Optimization and Nonsmooth Analysis.}
  Classics in Applied Mathematics, 5, SIAM, 1990.
  
  
  \bibitem{ColtonKress13} D. Colton and R. Kress. {\em Inverse Acoustic and Electromagnetic 
  Scattering Theory,} Springer Verlag Series in Applied Mathematics Vol. 93, Third Edition 2013.
  
  \bibitem{CombettesPesquet11} P.L. Combettes and JC. Pesquet. {\em Proximal Splitting Methods 
  in Signal Processing.} In: Bauschke H., Burachik R., Combettes P., Elser V., Luke D.,
   Wolkowicz H. (eds) Fixed-Point Algorithms for Inverse Problems in Science and Engineering.
   Springer Optimization and Its Applications, vol 49. Springer, New York, NY, 2011.
  
  \bibitem{DefriseVanhoveLiu11} M. Defrise, C. Vanhove, and X. Liu. {\em An algorithm for
  total variation regularization in high-dimensional linear problems,} 
  Inverse Problems 27 065002 (16pp), 2011.

  \bibitem{DobsonScherzer96} D. Dobson, O. Scherzer. {\em Analysis of regularized total variation
  penalty methods for denoising}, Inverse Problems, Vol. 12, No. 5, 601 - 617, 1996.

  \bibitem{DobsonVogel97} D. C. Dobson, C. R. Vogel. {\em Convergence of an iterative method for 
  total variation denoising}, SIAM J. Numer. Anal., Vol. 34, No. 5, 1779 - 1791, 1997.
  
  \bibitem{Engl96} H. W. Engl, M. Hanke, A. Neubauer. {\em Regularization of 
  inverse problems,}
  Math. Appl., 375. Kluwer Academic Publishers Group, Dordrecht, 1996.
  
  \bibitem{Flemming18} J. Flemming. {\em Existence of variational source conditions for nonlinear inverse problems in Banach spaces.}
  	J. Inverse Ill-Posed Probl., 26, 2, 277 - 286, 2018.

  \bibitem{FowkesGouldFarmer13} J. M. Fowkes, N. I. M. Gould, C. L. Farmer. 
   {\em A branch and bound algorithm for the global optimization
    of Hessian Lipschitz continuous functions,} J. Glob. Optim., 56, 1792 - 1815, 2013.
    
    \bibitem{GarrigosVilla18} G. Garrigos, L. Rosasco and S. Villa.
    {\em Iterative regularization via dual diagonal descent.}
    J Math Imaging Vis, 60, 2, 189 - 215, 2018.

  \bibitem{Grasmair10} M. Grasmair. {\em Generalized Bregman distances and convergence rates
    for non-convex regularization methods}, Inverse Problems 26, 11, 115014, 16pp, 2010.
   
  \bibitem{Grasmair13} M. Grasmair. {\em Variational inequalities and higher order convergence rates for 
  Tikhonov regularisation on Banach spaces,} J. Inverse Ill-Posed Probl., 21, 379-394, 2013.
  
  \bibitem{GrasmairHaltmeierScherzer11} M. Grasmair, M. Haltmeier, O. Scherzer.
  {\em Necessary and sufficient conditions for linear convergence of $\l^1$-regularization,}
   Comm. Pure Appl. Math. 64(2), 161-182, 2011.
  
  \bibitem{Groetsch93} C. W. Gr\"{o}tsch. {\em Inverse problems in the mathematical sciences},
  Vieweg, 1993.

  \bibitem{Groetsch07} C. W. Gr\"{o}tsch. {\em Integral equations of the first kind, inverse problems
  and regularization: a crash course,} J. Phys.: Conf. Ser. 73 012001, 2007.

  \bibitem{Hansen98} P. C. Hansen. {\em Rank deficient and discrete ill-posed problems:
  Numerical aspects of linear inversion}, SIAM Monogr. Math. Model. Comput., Philadelphia, 1998.

\bibitem{HamakerSmith80} C. Hamaker, K. T. Smith, D. C. Solmon, S. L. Wagner.
  {\em The divergent beam x-ray transform,} Rocky Mountain J. Math., 10, No.1, 
  253 - 283, 1980.
  
  \bibitem{HofmannKaltenbacher07} B Hofmann, B. Kaltenbacher, 
  C. P\"{o}schl and O. Scherzer. {\em A convergence rates results for Tikhonov 
  regularization in Banach spaces with non-smooth operator.} Inverse Problems, 23(3), 
  987 - 1010, 2007.
  
  \bibitem{HofmannMathe12} B. Hofmann and P. Math\'{e}. {\em Parameter choice in Banach space regularization
  under variational inequalities.} Inverse Problems 28, 104006 (17pp), 2012.
  
  \bibitem{HofmannScherzer07} B. Hofmann, B. Kaltenbacher, 
  C. P\``{o}schl, 
  and O. Scherzer. {\em A convergence rates result for Tikhonov 
  regularization in Banach spaces with non-smooth operator.} Inverse Problems,
  23 (3), 987 - 1010, 2007.
   
  
  \bibitem{HohageWeidling15} T. Hohage and F. Weidling. {\em Verification of a variational source condition
  for acoustic inverse medium scattering problems.} Inverse Problems, 31, 075006 (14pp), 2015.
  
  \bibitem{HohageSchormann98} T. Hohage and C. Schormann. {\em A Newton-type 
  method for a transmission problem in inverse scattering.} Inverse Problems
  14, 1207 - 1227, 1998.
  
  \bibitem{HohageWeidling16} T. Hohage and F. Weidling. {\em Characterizations of variational source conditions, 
  converse results, and maxisets of spectral regularization methods.} arXiv:1603.05133, 2016.
  
  \bibitem{Isakov06} V. Isakov. {\em Inverse problems for partial differential equations.} Second edition.
   Applied Mathematical Sciences, 127. Springer, New York, 2006.
   
   \bibitem{Kindermann16} S. Kindermann. {\em Convex Tikhonov 
   regularization in Banach spaces: New results on convergence rates.}
   J. Inverse Ill-Posed Probl., 24, 3, 341 - 350, 2016.
   
   \bibitem{Kirsch11} A. Kirsch. {\em An introduction to the mathematical theory of inverse problems.}
   Second edition. Applied Mathematical Sciences, 120. Springer, New York, 2011.
   
   \bibitem{Lorenz08} D. A. Lorenz. {\em Convergence rates and source conditions for Tikhonov
  regularization with sparsity constraints,} J. Inv. Ill-Posed Problems, 16, 463-478, 2008.
  
  \bibitem{LorisVerhoeven11} I. Loris and C. Verhoeven.
  {\em On a generalization of the iterative soft-thresholding algorithm for 
  the case of non-separable penalty}. Inverse Problems, 27, 125007, 15pp, 2011.
  
  \bibitem{LouchetMoisan11} C. Louchet and L. Moisan.
  {\em Total Variation as a Local Filter.}
  SIAM J. I MAGING S CIENCES, Vol. 4, No. 2, pp. 651 - 694, 
  2011.
  
  \bibitem{MiidlaRannatUba08} P. Miidla, K. Rannat P. Uba. {\em Tomographic approach for tropospheric
  water vapor detection,} Comput. Methods. Appl. Math., 8(3), 263-278, 2008.
  
  \bibitem{Monard14} F. Monard. {\em Numerical Implementation of Geodesic X-Ray Transforms and Their Inversion.}
  SIAM J. IMAGING SCIENCES, 7, 2, 1335 - 1357, 2014.
  
  \bibitem{Natterer01} F. Natterer. {\em The mathematics of computerized tomography},
  SIAM, Classics Appl. Math., 32, 2001.
  
  \bibitem{Natterer08} F. Natterer. {\em X-ray Tomography.} Inverse Problems and Imaging, 
  Springer, Berlin, 17 - 34, 2008.
  
  \bibitem{NattererWuebbeling01} F. Natterer and F. W\"{u}bbeling. {\em Mathematical methods
  in image reconstruction}, SIAM Monogr. Math. Model. Comput., 05, 2001.
  
  \bibitem{OlafssonQuinto06} G. \'{O}lafsson, E. T. Quinto. {\em The Radon transform, inverse
  problems and tomography,} Proceedings of Symposia in Applied Mathematics, Volume 63, American
  Mathematical Society Short Course, January 3-4, 2005, Atlanta, Georgia, 2006.
  
  \bibitem{Orlov76} S. S. Orlov. {\em Theory of three dimensional reconstruction II: The recovery operator.}
  Soviet Phys. Crystallogr., 20, 429 - 433, 1976.
  
  \bibitem{OsherBurger05} S. Osher, M. Burger, D. Goldfarb, 
  J. Xu and W. Yin. {\em An Iiterative Regularization Method 
  for Total Variation-Based Image Restoration.}
  Multiscale Model. Simul., 4(2), 460 - 489, 2005.
  
  \bibitem{Perler11} D. Perler, A. Geiger and F. Hurter. {\em 4D GPS water vapor tomography: 
  new parameterized approaches.}  J. Geod., 85, 539-550, 2011.

  \bibitem{Radon17} J. Radon. {\em \"{U}ber die Bestimmung von 
  Funktionnen durch ihre Integralwerte 
  l\"angs gewisser Mannigfaltikeiten}, Ber. Verh. Sachs. 
  Akad. Wiss. Leipzig-Math.-Natur. Kl.,
  69, 262 - 277, 1917.
  
  \bibitem{RockafellarWets98} R.T. Rockafellar, R. J.-B. Wets. {\em Variational Analysis.}
  Fundamental Principles of Mathematical Sciences, 317. Springer-Verlag, Berlin, 1998.

  \bibitem{RudenOsherFatemi92} L. I. Rudin, S. J. Osher, E. Fatemi. 
  {\em Nonlinear total variation based noise removal algorithms,} Physica D, 60, 259-268, 1992.
  
  \bibitem{ScherzerGrasmair09} O. Scherzer, M. Grasmair, H. Grossauer, M. Haltmeier and F. Lenzen.
  {\em Variational Methods in Imaging.} Applied Mathematical Sciences, 167, 
  Springer, New York, 2009.
  
  \bibitem{SchroederSchuster15} U. Schr\"{o}der and T. Schuster. {\em An Iterative Method to Reconstruct the Refractive Index of 
  a Medium From Time-of-Flight Measurements.} arXiv:1510.06899v1 , 2015.
  
  \bibitem{Schuster12} T. Schuster, B. Kaltenbacher, B. Hofmann, 
  K.S. Kazimierski. {\em Regularization Methods in Banach Spaces.}
  RICAM, 10, De Gruyter, 2012.
  
  \bibitem{ShannoPhua78} D. F. Shanno and K. Phua. {\em Matrix conditioning and nonlinear optimization.}
  Math. Prog., 14, 149 - 160, 1978.

\bibitem{Smithsolmon78} K. T. Smith, D. C. Solmon, S. L. Wagner, C. Hamaker. 
  {\em Mathematical aspects of divergent beam radiography,} Proc. Natl. Acad. Sci. USA, 
  75, No. 5, 2055 - 2058, 1978.

  \bibitem{SprungHohage17} B. Sprung and T. Hohage.
  {\em Higher order convergence rates for Bregman iterated 
  variational regularization of inverse problems.} 
   	arXiv:1710.09244, 2017.
   	
   	\bibitem{Takahashi13} W. Takahashi, N.C. Wong and J.C. Yao. 
   	{\em Fixed point theorems for nonlinear non-self mappings in Hilbert spaces 
   	and applications.} JFPTA, 2013:116, 2013.

  \bibitem{Tikhonov63} A. N. Tikhonov. {\em On the solution of ill-posed problems and the method of regularization,} 
   Dokl. Akad. Nauk SSSR, 151, 501-504, 1963.

  \bibitem{TikhonovArsenin77} A. N. Tikhonov, V. Y. Arsenin. 
  {\em Solutions of ill-posed problems.} Translated from the Russian. 
  Preface by translation editor Fritz John. Scripta Series in Mathematics. 
  V. H. Winston \& Sons, Washington, D.C.: John Wiley \& Sons, New York-Toronto, 
  Ont.-London, xiii+258 pp, 1977.
   
  
   \bibitem{Vogel00} C. R. Vogel. {\em A limited memory {B}{F}{G}{S} method for an inverse problem 
  in atmospheric imaging.} Methods and Applications of Inversion, Lecture Notes in Earth Sciences, 
  92, 292-304, 2000.

 \bibitem{Vogel02} C. R. Vogel. {\em Computational methods for inverse problems,}
  Frontiers Appl. Math. 23, 2002.

  \bibitem{VogelOman96} C. R. Vogel , M. E. Oman. 
  {\em Iterative methods for total variation
  denoising}, SIAM J. SCI. COMPUT., Vol. 17, No. 1, 227-238, 1996. 
   
  \bibitem{Xiong19} Z. Xiong, B. Zhang and Y. Yao. {\em Comparisons between 
  the WRF data assimilation and the GNSS tomography technique in retrieving 
  $3$-D wet refractivity fields in Hong Kong.} Ann. Geophys., 37, 25-36, 2019.
  
  \bibitem{YinOsherGoldfarb08} W. Yin, S. Osher, D. Goldfarb 
  and J. Darbon. {\em Bregman Iterative Algorithms for $\ell_1-$
  Minimization with Applications to Compressed Sensing.} SIAM J. Imaging
  Sciences, Vol. 1, N0. 1, pp.143 - 168, 2008.
  

   
  \bibitem{Zhao18} Q. Zhao, Y. Yao, X. Cao and W. Yao. {\em Accuracy and 
  reliability of tropospheric wet refractivity tomography with GPS, BDS, and 
  GLONASS observations.} Adv. Space Res. (2018), 
  \url{https://doi.org/10.1016/j.asr.2018.01.021}
  
  \bibitem{ZusBender12} F. Zus, M. Bender, Z. Deng, G. Dick, S. Heise, 
  M. Shang-Guan, J. Wickert, {\em A methodology to compute GPS slant total delays in a numerical weather
  model}, Radio Science, 2012, vol. 47, RS2018.
     
  \end{thebibliography}

\end{document}